\newcommand{\norm}[1]{\left\lVert#1\right\rVert}
\newtheorem{remark}{Remark}
\newcommand{\bu}{\mathbf u}
\newcommand{\bv}{\mathbf v}
\newcommand{\cE}{\mathcal E}
\newcommand{\cP}{\mathcal P}
\newcommand{\cI}{\mathcal I}
\newcommand{\R}{\mathbb{R}}
\newcommand{\rd}{\mathrm{d}}
\newcommand{\pr}{\partial_r}
\begin{document}
\title{Error analysis for a Finite Element Discretization of a corotational harmonic map heat flow problem }
\author{Nam Anh Nguyen \thanks{Institut f\"ur Geometrie und Praktische  Mathematik, RWTH-Aachen
University, D-52056 Aachen, Germany (nguyen@igpm.rwth-aachen.de)} \and Arnold Reusken\thanks{Institut f\"ur Geometrie und Praktische  Mathematik, RWTH-Aachen
University, D-52056 Aachen, Germany (reusken@igpm.rwth-aachen.de)} 
}
\maketitle
\begin{abstract}
	We consider the harmonic map heat flow problem for a corotational case. For discretization of this problem we apply a  $H^1$-conforming finite element method in space  combined with a semi-implicit Euler time stepping. The semi-implicit Euler method results in a linear problem in each time step. We restrict to the regime of smooth solutions of the continuous problem and present an error analysis of this discretization method. This results in optimal order discretization error bounds. Key ingredients of the analysis are a discrete energy estimate, that mimics the energy dissipation of the continuous solution, and   a convexity property that  is essential for discrete  stability and for control of the linearization error. We also present numerical results  that validate the theoretical ones.
\end{abstract}

\section{Introduction}
Let $\Omega \subset \mathbb{R}^N$, $N=2,3$,  be a bounded Lipschitz domain and $S^2$ the unit sphere in $\mathbb{R}^3$. The harmonic  map heat flow (HMHF) problem is as follows. Given an initial condition $\bu_0: \Omega \to S^2$, determine $\bu(\cdot,t): \Omega \to S^2$ such that
\begin{equation} \label{HMHFeq}
 \partial_t\mathbf{u}= \Delta \mathbf{u} + \rvert \nabla \mathbf{u} \lvert^2 \mathbf{u}, \quad \bu(\cdot,0)= \bu_0, \quad \bu(\cdot,t)_{|\partial \Omega}=(\bu_0)_{|\partial \Omega}, \quad t \in (0,T]. 
\end{equation}
This problem is obtained as the $L^2$ gradient flow of the Dirichlet energy
\begin{equation} \label{Energy} 
  E(\bv):= \tfrac12 \int_\Omega |\nabla \bv|^2 \, dx
\end{equation}
 of vector fields $\bv: \Omega \to \mathbb{R}^3$ that satisfy a pointwise unit length constraint. Unit length minimizers of this Dirichlet energy are called harmonic maps. This problem is closely related to the Landau-Lifshitz-Gilbert (LLG) equation. The HMHF equation can be considered as the limit of the LLG equation where the precessional term vanishes and only damping is left \cite{Melcher2011}. 
 Harmonic maps, HMHF and LLG equations have numerous practical applications, for example, in the modeling of ferromagnetic materials or of liquid crystals, cf. e.g.~\cite{Lakshmanan2011,LiuLi2013Skyr,prohl2001,Heinze2013}.
 
 There is an extensive mathematical literature in which topics related to well-posedness, weak formulations, regularity, blow-up phenomena and convergence of solutions of the HMHF problem to harmonic maps are studied, cf. e.g~\cite{Struwe85,Struwe08,Eells1964HarmonicMO,Hamilton1975HarmonicMO,KCChang89,Freire95,ChangDingYe1992,vandHout2001}.
 
 Early work on the development and analysis of numerical methods for HMHF or LLG problems is found in \cite{BarPro06,BaLuPr09,prohl2001,BartelsProhl2007,Alouges2008FEMLL, AlougesJaisson06,Cimrak05}. In recent years, there has been a renewed interest in the numerical analysis of methods for this problem class \cite{KovacsLuebich,BaKoWa22,BBPS24,AkBaPa23-pre,ABRW25-pre,NguyenReusken}.
 
In this paper, we study the HMHF problem for a specific \emph{corotational} case. Assume $\Omega$ is the unit disk in $\R^2$ and assume the solution $\bu$  to be corotational. Using polar coordinates on the disk a special type of solution  of \eqref{HMHFeq} is given by
 \begin{equation} \label{polcoord} 
 \mathbf{u}(r,\psi,t) =
   \begin{pmatrix}  \cos{\psi} \sin{ u(t,r) } \\  \sin{\psi} \sin{ u(t,r) } \\ \cos{ u(t,r)}\end{pmatrix}, 
\end{equation}
with a scalar unknown  function $u=u(t,r)$ on $[0,T] \times [0,1]$.  
Note that due to the structure  of the solution $\bu$ in \eqref{polcoord} the unit length constraint is satisfied. 
This leads to the following symmetric HMHF problem.
Given $0 < T <\infty$, $I:=[0,1]$ and $u_0 \in C^2(I)$ with $u_0(0)=u_0(1)=0$ and $|u_0| < \pi$,
determine $u=u(t,r)$ such that
\begin{equation} \label{eq:SSHMHF_Eq}
 \begin{split}
    \partial_tu &= \partial_{rr}u + \frac{1}{r}\partial_r u - \frac{\sin{(2u)}}{2r^2} \quad \text{for } r \in I, t \in (0,T], \\
    u(0,r) &= u_0(r) \quad \text{for } r \in I, \\
    u(t,0) & = u(t,1) =0 \quad\text{for }  t \in [0,T].
\end{split}
\end{equation}
Equation \eqref{eq:SSHMHF_Eq} has been used in models of nematic liquid crystals \cite{vandHout2001}. More importantly, 
the corotational case \eqref{eq:SSHMHF_Eq} plays a fundamental role in the analysis of HMHF.
In the seminal works \cite{ChangDingYe1992,Chang1991} the authors study  finite time singularities of \eqref{HMHFeq}
for the two-dimensional case $N=2$. In these studies the corotational case  \eqref{eq:SSHMHF_Eq} 
plays a crucial role. It is shown that for this problem with initial data $u_0$ with $|u_0| \leq \pi$ a unique global smooth solution exists, whereas for the case with  $|u_0(1)|> \pi$ the solution blows up in finite time, meaning that for the solution $u$ the derivative at $x=0$ becomes arbitrary large: $\lim_{ x\downarrow 0} |u_x(x,t)| \to \infty$ for $t \uparrow T_{\rm crit}$. 
The work \cite{ChangDingYe1992} has motivated further investigations of the blow-up behavior, e.g., \cite{Bertsch_DalPasso_vanderHout_2002,Topp02} where infinitely many solutions of \eqref{eq:SSHMHF_Eq} are constructed whose energy is  bounded by the initial energy for all times $t$, but can \emph{in}crease at certain points $t$, even for a smooth initial condition. 
A further related topic is the analysis  of the blow up rate. Several theoretical aspects of blow up rates of solutions of \eqref{eq:SSHMHF_Eq} are studied in  \cite{vdBergHulshofKing2003,AngHulsMatano09,RaphaelSchweyer13}.

In \cite{ChangDingYe1992} it is shown that  a function $\bu$ of the form \eqref{polcoord} solves \eqref{HMHFeq} if and only if $u$  solves the reduced problem \eqref{eq:SSHMHF_Eq}. Furthermore, the analysis in that paper proves blow-up of $u$, provided $|u_0(1)| > \pi$ in the initial condition holds. Hence, the discretization  method \eqref{eq:SSHMHF_FullyDiscretized} for the reduced problem \eqref{eq:SSHMHF_Eq}, that we analyze in this paper, can be used for numerical studies of the blow-up behavior of solutions of \eqref{HMHFeq}. 

In a series of works, Gustafson \emph{et al.} \cite{GUAN20091,Gustafson_Nakanishi_Tsai_2010,Gustafson2017GlobalSF} investigate the well-posedness and regularity of the $m$-equivariant version of \eqref{eq:SSHMHF_Eq}. Hocquet \cite{Hocquet19} has studied the finite-time singularity of a stochastic version of \eqref{eq:SSHMHF_Eq}.

The energy $E(\cdot)$ in \eqref{Energy} can be rewritten in terms of $u$. We have $E(\bu)=2 \pi \cE(u)$ with
 \begin{equation}  \cE(v)  := \tfrac12 \int_0^1 \left( (\partial_r v)^2 + \frac{\sin^2 v}{r^2} \right) r \mathrm{d}r.  \label{eq:SSHMHF_Energy}
 \end{equation} 
 
There are only very few papers in which numerical aspects  of \eqref{eq:SSHMHF_Eq} are treated. 
The paper \cite{HaynesHuangZegeling2013} treats a moving mesh ansatz for the discretization of \eqref{eq:SSHMHF_Eq}, based on  finite differences  in space combined with an ODE solver, that is used for capturing the blow up behavior of the solution with their numerical method. 
In \cite{NguyenReusken}, a first error analysis of a finite difference discretization of \eqref{eq:SSHMHF_Eq} is presented in the regime of smooth solutions. The main tool to establish stability is the use of $M$-matrix theory. An optimal discretization error bound (apart from a logarithmic term) is derived.

The main contribution of this paper is an error analysis for a finite element discretization of \eqref{eq:SSHMHF_Eq}. 
We restrict to the regime of smooth solutions of the continuous problem.
We consider  an $H^1$-conforming finite element discretization in space combined with a semi-implicit Euler method in time. The semi-implicit Euler method results in a linear problem in each time step.  
We summarize the main ingredients of our analysis. The energy $\cE(\cdot)$ given in \eqref{eq:SSHMHF_Energy} is \emph{not} convex. There, however, is a hidden convexity in the following sense. We use an energy splitting of the form  
\[
   \cE(v) = \tfrac12 \norm{v}_{H^1_r}^2  - \int_0^1 \frac{F\big(v(r)\big)}{r^2} r \, \mathrm{d}r, 
\]
where $\|\cdot\|_{H^1_r}$ is a weighted $H^1$-norm, cf. \eqref{h8} below. It turns out that the function $F$ used in this splitting \emph{is convex}. As far as we know, this convexity property and energy relation have not been discussed in any other work on equation \eqref{eq:SSHMHF_Eq} so far. We also 
show that, for functions $v$ with sufficiently small energy (as made precise in Lemma~\ref{lemma:Bounded_Energy_stationary}), one can bound $\|v\|_{L^\infty(I)}$ by an explicit function of $\cE(v)$.
Both the convexity property and this relation between $\|\cdot \|_{L^\infty}$  and $\cE(\cdot)$ are essential for deriving a discrete energy dissipation property (Theorem \ref{theo:Energy_dissipation}) and discrete stability (Corollary \ref{cor:Discrete_Bound}).
The derivation of a sharp discretization error bound, cf. Theorem \ref{theo:Discr_err}, relies on the classical splitting of the error using the Galerkin projection. Due to conformity of the finite element space, we can use  standard interpolation error bounds. The challenging part is to deal with the linearization error which requires a $L^\infty$-bound on the Galerkin projection of the continuous solution \emph{scaled} by $1/r$, cf. Lemma \ref{Lemhelpbound}. Combining this bound with a suitable convexity argument, we are able to control the linearization error in a satisfactory way.

The method and the error analysis in this work can be extended to the higher-degree corotational harmonic map heat flow considered in \cite{Gustafson2017GlobalSF}. 
However, we do not see how to extend our analysis to the equivariant case \cite{GUAN20091} or to the (equivariant case of) LLG equation \cite{BarPro06,VandenBergWilliams13}. A main obstacle is that  in these problems we do not have the convex structure and a corresponding  energy splitting that are crucial in our analysis.

The remainder of the paper is organized as follows. In Section \ref{sec:VarForm}, we introduce  the variational formulation and derive a convexity property.   In Section~\ref{sec:discr},  we formulate the finite element discretization and derive discrete energy dissipation and discrete stability results. A sharp bound for the discretization error is derived in Section~\ref{sec:error}. We validate our theoretical findings in Section~\ref{sec:NumExp} with a numerical example that demonstrates  convergence rates and energy dissipation. The source code of the numerical experiment can be found in \cite{nguyen_2025_15481333} and is based on the software package \emph{Netgen/NGSolve}.

\section{Variational formulation} \label{sec:VarForm}
For a  finite element discretization of this problem, we need a suitable variational formulation, that we now introduce. For $u,v \in H^1(I)$, we define the scalar products 
\begin{equation} \label{h8}   \begin{split} (v,w)_{0,r}& :=\int_0^1 v(r)w(r) r \, \rd r, \quad  (v,w)_{1,r}:= (\pr v, \pr w)_{0,r}, \\
      (v,w)_{H^1_r} & :=(v,w)_{1,r}+\left(\frac{v}{r},\frac{w}{r}\right)_{0,r}.
     \end{split}
 \end{equation}
Corresponding norms are denoted by $\|\cdot\|_{0,r}$, $\|\cdot\|_{1,r}$ and $\|\cdot \|_{H_r^1}$, respectively.
The use of the scaling with $r$ in these scalar products is natural because the problem \eqref{eq:SSHMHF_Eq} originates from a transformation to polar coordinates. We define the spaces
\begin{align*}
	L^2_r &:= \left\{v:[0,1]\rightarrow \mathbb{R}: \norm{v}_{0,r}< \infty\right\}, \\
    H_{r,0}^1 &:= \left\{v \in L^2_r: \norm{v}_{H^1_r}< \infty, v(0)=v(1)=0\right\}.
\end{align*}
The solution $u$ of \eqref{eq:SSHMHF_Eq}  solves the variational problem
\begin{align}
    \left(\partial_tu, v \right)_{0,r} + \left(u,v\right)_{H^1_r} = \left(\frac{2u-\sin{(2u)}}{2r^2}, v \right)_{0,r}
    \quad \text{for all}~ t \in [0,T], ~v \in H_{r,0}^1. \label{eq:SSHMHF_VariationalForm}
\end{align}
Furthermore, a smooth solution of \eqref{eq:SSHMHF_VariationalForm} solves \eqref{eq:SSHMHF_Eq}. In the remainder, we consider the variational formulation \eqref{eq:SSHMHF_VariationalForm}, which will be the basis of a finite element discretization. 

We briefly comment on the interpretation of \eqref{eq:SSHMHF_VariationalForm} as an $L^2_r$ gradient flow problem. Using the definition \eqref{eq:SSHMHF_Energy}, we obtain that the $L^2_r$ gradient flow
\[
  (\partial_t u, v)_{0,r} = - \left(\cE'(u),v\right)_{0,r}, \quad \cE'(u)=\frac{\partial \cE}{\partial u},
\]
coincides with the variational problem \eqref{eq:SSHMHF_VariationalForm}.
Taking $v=\partial_t u$ in this gradient flow equation we obtain the  energy decay property 
\begin{equation} \label{Edecay}
\cE(u(t)) + \int_0^t \|\partial_t u\|_{0,r}^2 \, \rd s \leq \cE(u_0). \end{equation}
We derive a few further properties that will be used in the analysis of the finite element discretization in the following sections. For this we introduce 
\begin{align}
      F(v)&:=\tfrac12 \left(v^2 - \sin^2 v\right), \\
    f(v) &:= F'(v) = v- \tfrac12 \sin (2 v), 
\end{align}
for which the fundamental relation
\begin{equation} 
   \cE(u) = \tfrac12 \norm{u}_{H^1_r}^2  - \int_0^1 \frac{F\big(u(r)\big)}{r^2} r \mathrm{d}r \label{eq:SSHMHF_Rewrite_Energy}
\end{equation}
holds. 
The result in the following lemma, which is derived using a technique introduced in  \cite[Lemma 2.3]{RoxanasPhD2017},   bounds the maximum norm  $\|\cdot\|_\infty=\|\cdot\|_{L^\infty(I)}$ of a function by its energy. We define, for $b >0$,
\begin{align*}
    \mathcal{E}_{b} := \{u \in H^1_{r,0}~|~ \cE(u) \leq 2b\}. 
\end{align*}
\begin{lemma} 
    For all $u \in \mathcal{E}_{1}$ the following holds:
    \begin{align*}
         \|u\|_\infty \leq 2 \arcsin \sqrt{\tfrac12 \cE(u)} .
    \end{align*}
    \label{lemma:Bounded_Energy_stationary}
\end{lemma}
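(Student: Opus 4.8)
The plan is to bound the energy $\cE(u)$ from below by a quantity that depends only on $\|u\|_\infty$, and then to invert that relation. The first step is a pointwise AM--GM estimate on the integrand appearing in $2\cE(u)$, namely
\[
   (\pr u)^2\, r + \frac{\sin^2 u}{r} \;\ge\; 2\,|\pr u|\,|\sin u| ,
\]
which eliminates the weights $r$ and $1/r$ in favour of the ``flux'' $|\pr u\,\sin u|$. The crucial observation (this is the technique borrowed from \cite[Lemma 2.3]{RoxanasPhD2017}) is that $|\pr u\,\sin u|$ is, up to an absolute value, the derivative of a primitive of $|\sin|$.

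Accordingly, I would introduce $\Phi(t):=\int_0^t |\sin s|\,\rd s$, which is nondecreasing, odd, satisfies $\Phi(0)=0$, and equals $1-\cos t = 2\sin^2(t/2)$ for $t\in[0,\pi]$. By the chain rule $\frac{\rd}{\rd r}\Phi\big(u(r)\big) = |\sin u|\,\pr u$, so the AM--GM integrand is $2\,|\frac{\rd}{\rd r}\Phi(u)|$. Assuming $u\not\equiv 0$, let $r_0\in(0,1)$ be a point at which $|u|$ attains its maximum $M:=\|u\|_\infty$. Integrating the AM--GM bound only over $[0,r_0]$ (discarding $[r_0,1]$ by nonnegativity) and bounding the total variation of $\Phi\circ u$ from below by its net increment gives
\[
   2\,\cE(u) \;\ge\; 2\int_0^{r_0}\Big|\tfrac{\rd}{\rd r}\Phi\big(u(r)\big)\Big|\,\rd r \;\ge\; 2\,\big|\Phi(u(r_0))-\Phi(u(0))\big| \;=\; 2\,\Phi(M),
\]
where the last equality uses $u(0)=0$, $\Phi(0)=0$, and $|\Phi(\pm M)|=\Phi(M)$. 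Thus $\cE(u)\ge\Phi(\|u\|_\infty)$, with no monotonicity or range assumption on $u$.

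To finish, I would invoke the hypothesis $u\in\mathcal{E}_1$, i.e. $\cE(u)\le 2$. Since $\Phi(\pi)=2$ and $\Phi$ is nondecreasing, the chain $\Phi(\|u\|_\infty)\le\cE(u)\le 2=\Phi(\pi)$ forces $\|u\|_\infty\le\pi$. On this range $\Phi(\|u\|_\infty)=2\sin^2(\|u\|_\infty/2)$, so $2\sin^2(\|u\|_\infty/2)\le\cE(u)$, i.e. $\sin(\|u\|_\infty/2)\le\sqrt{\tfrac12\cE(u)}$. Because $\|u\|_\infty/2\in[0,\pi/2]$ and $\sqrt{\tfrac12\cE(u)}\le 1$, applying the increasing map $\arcsin$ yields exactly $\|u\|_\infty\le 2\arcsin\sqrt{\tfrac12\cE(u)}$.

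I expect the main obstacle to be the passage from the energy to $\Phi(\|u\|_\infty)$ for a general $u$ that need be neither monotone nor $[0,\pi]$-valued; the antiderivative/total-variation device is precisely what makes this robust against an oscillating profile. The second delicate point is the a priori bound $\|u\|_\infty\le\pi$, which is where the constraint $\cE(u)\le 2$ is genuinely used and which is needed to make $\arcsin$ applicable (and explains why the threshold in the definition of $\mathcal{E}_1$ is set at $2$). A minor technical check is that $u\in H^1_{r,0}$ is absolutely continuous on compact subintervals of $(0,1]$, so that the chain rule and the fundamental theorem of calculus apply there, with the endpoint $r=0$ handled by a limiting argument using $u(0)=0$.
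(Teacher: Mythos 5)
Your proposal is correct and follows essentially the same route as the paper: the primitive $\Phi(t)=\int_0^t|\sin s|\,\rd s$ is exactly the paper's $G/\pi$, the AM--GM step combined with the chain rule and $u(0)=0$ is the paper's key estimate, and the inversion via $\Phi(t)=2\sin^2(t/2)$ on $[0,\pi]$ reproduces the paper's $\arcsin$ bound. The only cosmetic difference is that the paper bounds $|G(u(r))|$ at every $r$ rather than at a maximizing point, which sidesteps the (minor) attainment issue you flag at the end.
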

\begin{proof} Define
\begin{align*}
    G(z) :=\pi \int_0^z |\sin(s)|\mathrm{d}s.
\end{align*}
Note that
\[
    G(-z) = \pi \int_0^{-z} |\sin(s)| \rd s = -G(z), \quad
    G'(z) = |\sin(z)|,
\]
and thus $G$ is odd and strictly increasing in $[-\pi,\pi]$ with a strictly monotonic inverse. Take $\alpha \in (0,\tfrac12 ]$. Using $G(2\alpha \pi)=\pi(1- \cos (2\alpha \pi))$ it follows that $G:\, [- 2\alpha \pi, 2\alpha \pi] \to [-\pi(1- \cos (2\alpha \pi)),  \pi (1 -\cos (2\alpha \pi))]$ is bijective. With $\beta=\tfrac12(1-\cos(2\alpha \pi)) \in (0,1]$ and using the identity $1-\cos(2\alpha \pi) = 2 \sin^2(\alpha \pi)$, this can be rewritten as 
\begin{equation} \label{hh} G: \, [-2\arcsin\sqrt{\beta}, 2\arcsin\sqrt{\beta}] \to [-2 \pi \beta, 2 \pi \beta].  
\end{equation}
Take $u \in \cE_b$. 
Then with $u(0)=0$ we obtain, for any $r \in I$,
\begin{align*}
    |G(u(r))| &= \left|\int_0^r \partial_{\Tilde{r}} G(u(\Tilde{r}))\, \mathrm{d}\Tilde{r} \right| = \pi \left| \int_0^r |\sin(u)| \partial_{\Tilde{r}} u\,  \mathrm{d}\Tilde{r} \right| \\
    &\leq \frac{\pi}{2}\int_0^1 \left( (\partial_r u) ^2 +\frac{\sin^2 u}{r^2} \right) r \, \mathrm{d}r = \pi \cE (u)  . 
\end{align*}
Using $\cE(u)\leq 2$, we obtain $-2\pi \leq G(u(r))\leq 2 \pi$. The result in \eqref{hh} with $\beta=\tfrac12  \cE(u)$ and the existence of a strictly increasing monotonic inverse of $G$ yield the estimate $|u(r)| \leq 2 \arcsin \sqrt{\beta} = 2\arcsin\sqrt{\tfrac12 \cE(u)}$ for all $r \in I$. This completes the proof.
\end{proof}

Below we will use the following direct corollary of the result above:
\begin{equation} \label{maxestimate}
  u \in \cE_1 \quad \Rightarrow \quad \|u\|_\infty \leq \pi.  
\end{equation}
It will be useful to have relations between the energy $\cE(u)$ and the norm $\norm{u}^2_{H^1_r}$. 
These are given in the following lemma.
\begin{lemma}
    \label{theo:Energy_Norm_Equivalence}
    The following estimates hold:
    \begin{align} \label{upperB}
     \cE(u) & \leq \tfrac12 \norm{u}^2_{H^1_r} \quad \text{for all}~u \in H_{r,0}^1, \\
     C_b \norm{u}^2_{H^1_r} & \leq \cE(u) \quad \text{for all}~ u \in \cE_b, ~ 0<b<1,  \label{lowerB}
    \end{align}
with $C_b:= \frac{b(1-b)^2}{2 \, \arcsin^2 \sqrt{b} }$.
\end{lemma}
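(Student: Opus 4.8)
The plan is to read both estimates off the energy identity \eqref{eq:SSHMHF_Rewrite_Energy}, keeping track of the sign and the size of the correction term $\int_0^1 \frac{F(u(r))}{r^2}\, r\,\rd r$. The upper bound \eqref{upperB} is the easy half: since $|\sin v|\le |v|$ for all $v\in\R$, we have $F(v)=\tfrac12(v^2-\sin^2 v)\ge 0$, so the correction term is nonnegative (and finite, because $0\le F(u)\le\tfrac12 u^2$ and $u\in H^1_{r,0}$ makes $\int_0^1\frac{u^2}{r^2}r\,\rd r$ finite). Dropping it in \eqref{eq:SSHMHF_Rewrite_Energy} gives $\cE(u)\le\tfrac12\norm{u}_{H^1_r}^2$ for every $u\in H^1_{r,0}$, with no size restriction needed.

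For the lower bound \eqref{lowerB} the crucial point is that $\cE$ is \emph{not} coercive on all of $H^1_{r,0}$, so I would first exploit the a priori membership $u\in\cE_b$ to keep $\norm{u}_\infty$ away from $\pi$. Since $b<1$ gives $\cE(u)\le 2b\le 2$, i.e.\ $u\in\cE_1$, Lemma~\ref{lemma:Bounded_Energy_stationary} applies and yields $\norm{u}_\infty\le 2\arcsin\sqrt{\tfrac12\cE(u)}\le 2\arcsin\sqrt{b}=:M_b$, with $M_b<\pi$ precisely because $b<1$. On $[-M_b,M_b]$ I would then compare $\sin^2 u$ with $u^2$: the map $s\mapsto (\sin s)^2/s^2$ is decreasing on $(0,\pi)$, hence $\sin^2 u\ge (\sin M_b/M_b)^2\,u^2$ pointwise a.e., equivalently $F(u)\le\tfrac12\big(1-(\sin M_b/M_b)^2\big)u^2$. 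This proportional-to-$u^2$ form is exactly what is needed to absorb the $1/r^2$ weight and retain integrability.

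Substituting this bound into the correction term of \eqref{eq:SSHMHF_Rewrite_Energy} and using $\int_0^1\frac{u^2}{r^2}r\,\rd r\le\norm{u}_{H^1_r}^2$ gives
\[
 \cE(u)\ \ge\ \tfrac12\norm{u}_{H^1_r}^2-\tfrac12\Big(1-\big(\tfrac{\sin M_b}{M_b}\big)^2\Big)\norm{u}_{H^1_r}^2=\tfrac12\Big(\tfrac{\sin M_b}{M_b}\Big)^2\norm{u}_{H^1_r}^2 .
\]
It then only remains to evaluate the constant. Writing $\theta=\arcsin\sqrt b$, one has $M_b=2\theta$ and $\sin M_b=2\sin\theta\cos\theta=2\sqrt{b(1-b)}$, so $\tfrac12(\sin M_b/M_b)^2=\frac{b(1-b)}{2\arcsin^2\sqrt b}$. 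Since $0<1-b\le 1$, this constant is at least the stated $C_b=\frac{b(1-b)^2}{2\arcsin^2\sqrt b}$, which proves \eqref{lowerB} (with, in fact, a slightly sharper constant).

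The main obstacle is the lower bound, and specifically the coercivity issue just described: the correction term can consume the entire $H^1_r$-norm as $\norm{u}_\infty\uparrow\pi$, and indeed the comparison factor $(\sin M_b/M_b)^2$ degenerates to $0$ as $b\uparrow 1$. Hence the qualitative restriction $u\in\cE_b$ with $b<1$ is indispensable, and it is Lemma~\ref{lemma:Bounded_Energy_stationary} that converts this energy bound into the quantitative $L^\infty$-bound $\norm{u}_\infty\le M_b<\pi$ driving the whole estimate.
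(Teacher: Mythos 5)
Your proof is correct and follows essentially the same route as the paper: both halves rest on Lemma~\ref{lemma:Bounded_Energy_stationary} to confine $\norm{u}_\infty$ to $[0,2\arcsin\sqrt b]\subset[0,\pi)$ and then on the pointwise comparison $\sin^2 u\ge (\sin M_b/M_b)^2 u^2$, which is exactly the paper's secant-line argument phrased via the monotonicity of $s\mapsto\sin s/s$. Your constant $\frac{b(1-b)}{2\arcsin^2\sqrt b}$ is indeed slightly sharper than the stated $C_b$ because you evaluate $\sin(2\arcsin\sqrt b)=2\sqrt{b(1-b)}$ exactly, whereas the paper uses the smaller value $2\sqrt b\,(1-b)$; since a smaller slope only weakens the lower bound, both versions are valid.
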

\begin{proof}
    The estimate  \eqref{upperB} follows from
   $
        |\sin z| \leq |z|$ 
    for all $z \in \mathbb{R}$. 
   Take $u \in \cE_b, ~ 0<b<1$. From Lemma~\ref{lemma:Bounded_Energy_stationary}  we obtain $\|u\|_\infty \leq 2 \arcsin \sqrt{b} =:z_b < \pi$. Note $\sin z_b= 2\sqrt{b}(1-b) >0$. The line through $(0,0)$ and $(z_b, \sin z_b)$ has slope $\alpha_b=\frac{\sqrt{b}(1-b)}{\arcsin \sqrt{b}}$, and for all $z \in [0,z_b]$ we have $\sin z \geq \alpha_b z$. Using this we obtain for $r \in [0,1]$
   \[
    \sin^2 u(r) = \sin^2 |u(r)| \geq \alpha_b^2 u(r)^2,
   \]
which  implies the result \eqref{lowerB}. 
\end{proof}

From the results \eqref{upperB}-\eqref{lowerB} it follows that we have an equivalence $\|\cdot\|_{H_r^1}^2 \sim \cE(\cdot)$ on $\cE_b$, provided $b<1$. 

The nonlinear part of \eqref{eq:SSHMHF_VariationalForm} is given by $\left(\frac{2u-\sin (2u)}{2 r^2}, v\right)_{0,r}=\left(\frac{f(u)}{r^2}, v\right)_{0,r}$. Note that due to the $r^2$ in the denominator there is a blow up effect for $r \downarrow 0$.  In the analysis below, to control this term we will use a convexity argument. The energy $\cE(\cdot)$ is \emph{not} convex, but the function $F$, cf. the relation \eqref{eq:SSHMHF_Rewrite_Energy}, is convex, since $F''(z)=f'(z)=1-\cos (2z) \geq 0$ for all $z \in \R$. This will be used to derive the results in the next lemma, which play a key role in finite element error analysis.
\begin{lemma} \label{lemcontrol}
 The following holds:
 \begin{align}
  f(z_1)(z_2-z_1) & \leq F(z_2) -F(z_1) \quad \text{for all}~z_i \in \R \label{control1} \\
  \left(\frac{f(u)}{r^2}, w-u\right)_{0,r} & \leq \tfrac12 \big(\|w\|_{H_r^1}^2 -\|u\|_{H_r^1}^2\big) + \cE(u)-\cE(w) \quad \text{for all}~u,w \in H_{0,r}^1. \label{control2}
 \end{align}
\end{lemma}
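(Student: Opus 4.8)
The plan is to derive both inequalities directly from the convexity of $F$, which was already recorded just above the statement via $F''(z)=f'(z)=1-\cos(2z)\geq 0$. Inequality \eqref{control1} is simply the first-order (tangent-line) characterization of convexity: for a differentiable convex $F$ one has $F(z_2)\geq F(z_1)+F'(z_1)(z_2-z_1)$, and since $F'=f$ this rearranges to \eqref{control1}. I would either cite this standard fact or give the one-line monotonicity argument $F(z_2)-F(z_1)=\int_{z_1}^{z_2}f(s)\,\rd s\geq f(z_1)(z_2-z_1)$, using that $f=F'$ is nondecreasing.

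To obtain \eqref{control2} I would apply \eqref{control1} pointwise with $z_1=u(r)$ and $z_2=w(r)$ and integrate against the positive measure $r^{-2}\,r\,\rd r$ on $I$. The left-hand side then becomes exactly $\left(\frac{f(u)}{r^2},w-u\right)_{0,r}$, and the right-hand side becomes $\int_0^1 \frac{F(w)-F(u)}{r^2}\,r\,\rd r$. The final step is to rewrite this integral using the energy splitting \eqref{eq:SSHMHF_Rewrite_Energy}: rearranged, it reads $\int_0^1 \frac{F(v)}{r^2}\,r\,\rd r=\tfrac12\norm{v}_{H^1_r}^2-\cE(v)$ for every $v\in H^1_{r,0}$. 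Subtracting this identity for $u$ from the one for $w$ yields precisely $\tfrac12\big(\norm{w}_{H^1_r}^2-\norm{u}_{H^1_r}^2\big)+\cE(u)-\cE(w)$, which is the claimed bound. So the whole of \eqref{control2} is really just the integrated convexity inequality reinterpreted through the energy relation.

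The only point requiring care is integrability near $r=0$, where the weight $r^{-2}$ is singular; I expect this to be the only genuine technical step rather than a real obstacle. I would control it with the elementary pointwise bounds $|f(z)|\leq 2|z|$ (from $|\sin(2z)|\leq 2|z|$) and $0\leq F(z)\leq \tfrac12 z^2$ (from $\sin^2 z\geq 0$). The first, combined with Cauchy--Schwarz, gives $\int_0^1 \frac{|f(u)|\,|w-u|}{r^2}\,r\,\rd r\leq 2\,\norm{u/r}_{0,r}\norm{(w-u)/r}_{0,r}<\infty$, so the left-hand inner product is well defined; the second gives $\int_0^1 \frac{F(v)}{r^2}\,r\,\rd r\leq \tfrac12\norm{v/r}_{0,r}^2<\infty$, so the energy-splitting identity applies. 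Since $u,w\in H^1_{r,0}$ have finite $\norm{\cdot}_{H^1_r}$-norm, every term is finite and the pointwise-to-integral passage is legitimate. The main conceptual content is simply the observation that the $r^{-2}$-weighted integral of $F$ is exactly the term subtracted in \eqref{eq:SSHMHF_Rewrite_Energy}, so that the convexity of $F$ transfers cleanly into the energy-and-norm inequality \eqref{control2}.
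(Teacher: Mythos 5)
Your proposal is correct and follows essentially the same route as the paper: the tangent-line inequality for the convex function $F$ gives \eqref{control1}, and integrating it pointwise against the weight $r^{-2}\,r\,\rd r$ combined with the energy splitting \eqref{eq:SSHMHF_Rewrite_Energy} gives \eqref{control2}. Your added remarks on integrability near $r=0$ are a reasonable extra precaution that the paper leaves implicit.
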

\begin{proof}
The result \eqref{control1} is due to the convexity of $F$ and $F'=f$.  
For $u,w \in H_{0,r}^1$ we have, using the estimate \eqref{control1} and the fundamental relation \eqref{eq:SSHMHF_Rewrite_Energy},
\begin{align*}
 \left(\frac{f(u)}{r^2}, w-u\right)_{0,r} & = \int_0^1 \frac{f\big(u(r)\big)}{r^2} ( w(r)-u(r)) r \, \rd r\\
  & \leq \int_0^1 \frac{F\big(w(r)\big) - F\big( u(r)\big)}{r^2} \, r \, \rd r  \\
  & = \cE(u) -\cE(w) + \tfrac12 \big(\|w\|_{H_r^1}^2 -\|u\|_{H_r^1}^2\big),
\end{align*}
which proves the result \eqref{control2}. 
\end{proof}

A further useful estimate that we use in the error analysis is:
\begin{align}
    \label{Inequ_nonlin}
    |f(z_1)-f(z_2)| \leq \left(z_1^2 + z_2^2\right) |z_1-z_2| \quad \text{for all}~z_i \in \R,
\end{align}
which follows from
\begin{align*}
 |f(z_1)-f(z_2)| &= \left| \int_0^1 f'(z_2+s(z_1-z_2))\, \rd s  \right| |z_1-z_2|  \\ & = 2 \int_0^1 \sin^2(z_2+s(z_1-z_2))\, \rd s \, |z_1-z_2| \\
 & \leq 2 \int_0^1 (z_2+s(z_1-z_2))^2\, \rd s \, |z_1-z_2|  \\ & = \tfrac23 (z_1^2+ z_1 z_2 + z_2^2) |z_1-z_2| \leq \left(z_1^2 + z_2^2\right) |z_1-z_2|. 
\end{align*}

\section{Discrete problem}\label{sec:discr}
The discrete problem that we consider uses an implicit Euler method for time discretization, where the nonlinear term in \eqref{eq:SSHMHF_VariationalForm} is treated in a semi-implicit way. Due to this, in each time step there is a linear problem to be solved. For the discretization in space we use a standard  finite element space. 

We introduce some further notation. We use the notation $H_0^1(I)$ for the standard Sobolev space with homogeneous boundary conditions. 
In space we use grid points $r_i = i h, i=0,\hdots, N$, with $h=1/N$ for some $N \in \mathbb{N}$. We use a uniform grid to simplify the presentation. All results can easily be generalized to the case of a quasi-uniform grid. We use the $H^1_0(I)$-conforming finite element space
\begin{align*}
    S^k_h & := \{v \in C^0(I)~|~ v_h|_{[r_i,r_{i+1}]} \in P_k \text{ for all } i=0,\hdots, N-1 \}, \quad k \geq 1,  
\\
    S_{h,0}^k &:=  S^k_h \cap H_0^1(I). 
\end{align*}
For discretization in time we use a fixed time step $\tau$, with $\tau J=T$ for some $J \in \mathbb{N}$.
We assume a \emph{given} initial value $u_h^0 \in  S_{h,0}^k$ that approximates $u_0$. The choice of this approximation will be discussed below. We introduce the following \emph{discrete problem}: For $j\geq 0$, determine $u_h^{j+1} \in S^k_{h,0}$ such that 
\begin{align}
    \left(\frac{u^{j+1}_h-u^j_h}{\tau}, v_h\right)_{0,r} + \left(u_h^{j+1},v_h\right)_{H^1_r} = \left(\frac{f(u_h^j)}{r^2},v_h\right)_{0,r} \quad \text{for all}~ v_h \in S^k_{h,0}.
    \label{eq:SSHMHF_FullyDiscretized}
\end{align}
Thus in each time step we have a uniquely solvable \emph{linear} problem. 
\subsection{Stability analysis}
We derive a stability result for this discrete problem.
\begin{theorem} \label{theo:Energy_dissipation}
 For the solution $(u_h^j)_{1 \leq j \leq J}$ of \eqref{eq:SSHMHF_FullyDiscretized} the following holds:
\begin{align}
 \cE(u_h^{j+1}) +\tfrac{1}{\tau}\norm{u_h^{j+1}-u_h^j}_{0,r}^2  + \norm{u_h^{j+1}-u_h^j}_{H^1_r}^2 & \leq \cE(u_h^j), \label{Eestimate1}\\
        \cE(u_h^{j+1}) & \leq \cE(u_h^{j}). \label{theo:SSHMHF_Energy_Inequ}
    \end{align}
    \label{theo:Energy_Dissipation}
\end{theorem}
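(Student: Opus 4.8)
The plan is to derive \eqref{Eestimate1} by testing the discrete equation \eqref{eq:SSHMHF_FullyDiscretized} with the increment $v_h=u_h^{j+1}-u_h^j\in S^k_{h,0}$; this is the fully discrete analogue of the choice $v=\partial_t u$ in the continuous gradient flow that produced the energy decay \eqref{Edecay}. With this test function the time-difference term reproduces $\tfrac1\tau\norm{u_h^{j+1}-u_h^j}_{0,r}^2$ exactly, so the entire difficulty sits in rewriting the implicit bilinear form and in estimating the explicitly evaluated nonlinear term.

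For the bilinear form I would use the polarization identity $(a,a-b)_{H^1_r}=\tfrac12\norm{a}_{H^1_r}^2-\tfrac12\norm{b}_{H^1_r}^2+\tfrac12\norm{a-b}_{H^1_r}^2$ with $a=u_h^{j+1}$, $b=u_h^j$. This produces a telescoping pair $\tfrac12\big(\norm{u_h^{j+1}}_{H^1_r}^2-\norm{u_h^{j}}_{H^1_r}^2\big)$ together with a genuinely dissipative contribution involving $\norm{u_h^{j+1}-u_h^j}_{H^1_r}^2$.

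The key step, and the main obstacle because the nonlinearity is treated explicitly, is to control $\left(\frac{f(u_h^j)}{r^2},\,u_h^{j+1}-u_h^j\right)_{0,r}$. Here I would invoke the convexity estimate \eqref{control2} of Lemma~\ref{lemcontrol} with $u=u_h^j$ and $w=u_h^{j+1}$, bounding this term by $\tfrac12\big(\norm{u_h^{j+1}}_{H^1_r}^2-\norm{u_h^{j}}_{H^1_r}^2\big)+\cE(u_h^j)-\cE(u_h^{j+1})$. This is precisely where the hidden convexity of $F$ does the work: it converts the explicitly evaluated nonlinear term into an energy difference plus exactly the same telescoping $H^1_r$-pair that appears on the left-hand side. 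Substituting both rewritings into the tested identity, the two copies of $\tfrac12\big(\norm{u_h^{j+1}}_{H^1_r}^2-\norm{u_h^{j}}_{H^1_r}^2\big)$ cancel, and one is left with
\[
\cE(u_h^{j+1})+\tfrac1\tau\norm{u_h^{j+1}-u_h^j}_{0,r}^2+\tfrac12\norm{u_h^{j+1}-u_h^j}_{H^1_r}^2\le \cE(u_h^j),
\]
which is \eqref{Eestimate1} up to the coefficient of the increment seminorm. (The cancellation of the cross terms leaves the natural factor $\tfrac12$; I would check whether recovering the factor $1$ stated in \eqref{Eestimate1} requires a sharper, strongly convex estimate for $F$, which looks delicate because $F''(z)=2\sin^2 z$ degenerates wherever $\sin z$ vanishes.)

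Finally, \eqref{theo:SSHMHF_Energy_Inequ} is immediate: both increment norms in \eqref{Eestimate1} are nonnegative, so dropping them yields $\cE(u_h^{j+1})\le\cE(u_h^{j})$. I would emphasize that no smallness or CFL-type restriction on $\tau$ or $h$ enters, since the implicit part is coercive and the only potentially destabilizing contribution, namely the explicit nonlinearity, is absorbed by convexity rather than by a discrete Gronwall argument.
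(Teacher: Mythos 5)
Your proof is correct and follows the paper's argument step for step: test with the increment $u_h^{j+1}-u_h^j$, polarize the implicit bilinear form, and absorb the explicitly evaluated nonlinearity via the convexity estimate \eqref{control2}. Your concern about the coefficient is well founded: the paper's own proof likewise produces only $\tfrac12\norm{u_h^{j+1}-u_h^j}_{H^1_r}^2$, and \eqref{control2} cannot be sharpened to recover the factor $1$ (as you note, $F''(z)=2\sin^2 z$ degenerates), so the coefficient $1$ in \eqref{Eestimate1} appears to be a slip; the correct statement carries $\tfrac12$, which changes nothing downstream since only the nonnegativity of that term is used to obtain \eqref{theo:SSHMHF_Energy_Inequ} and Corollary~\ref{cor:Discrete_Bound}.
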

\begin{proof}
    We test \eqref{eq:SSHMHF_FullyDiscretized} with $v_h = u_h^{j+1}-u_h^j$, which yields,
    \begin{align}
        &\tfrac{1}{\tau}\norm{u_h^{j+1}-u_h^j}_{0,r}^2 + \tfrac{1}{2}\left( \norm{u_h^{j+1}}_{H^1_r}^2 - \norm{u_h^{j}}_{H^1_r}^2 + \norm{u_h^{j+1}-u_h^j}_{H^1_r}^2  \right) \notag\\
        &\quad \quad = \left(\frac{f(u_h^j)}{r^2}, u_h^{j+1}-u_h^j\right)_{0,r}.  \label{eq:SSHMHF_Stability_test}
    \end{align}
The estimate \eqref{control2} yields
    \begin{align*}
      \left(\frac{f(u_h^j)}{r^2}, u_h^{j+1}-u_h^j\right)_{0,r} \leq \tfrac12 \big(\norm{ u_h^{j+1}}_{H^1_r}^2 - \norm{ u_h^{j}}_{H^1_r}^2\big) +\cE(u_h^j) - \cE(u_h^{j+1}).
    \end{align*}
Using this in \eqref{eq:SSHMHF_Stability_test} yields the result \eqref{Eestimate1}. The result in \eqref{theo:SSHMHF_Energy_Inequ} is a direct consequence of \eqref{Eestimate1}.
\end{proof}

The result \eqref{Eestimate1} is a discrete analogue of \eqref{Edecay}.
\begin{corollary} \label{cor:Discrete_Bound}
If $u_h^0 \in \mathcal{E}_{1}$, then 
\begin{align*}
    \|u_h^{j}\|_{L^\infty} \leq \pi, \quad 0 \leq j \leq J,
\end{align*}
holds.
\end{corollary}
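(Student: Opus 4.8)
The goal is to show that if the initial discrete data lies in the sublevel set $\cE_1$, then every iterate $u_h^j$ stays uniformly bounded by $\pi$ in $L^\infty$. The strategy is immediate once one combines the two ingredients already established: the discrete energy dissipation (Theorem \ref{theo:Energy_dissipation}) and the energy-to-maximum-norm bound (Lemma \ref{lemma:Bounded_Energy_stationary}, in the convenient form \eqref{maxestimate}). The plan is to argue by induction on $j$, carrying the invariant that $u_h^j \in \cE_1$ throughout.

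The base case is the hypothesis itself: $u_h^0 \in \cE_1$, i.e. $\cE(u_h^0) \le 2$. For the inductive step, assume $u_h^j \in \cE_1$. Each $u_h^j$ belongs to $S^k_{h,0} \subset H^1_{r,0}$, so the energy $\cE(\cdot)$ is defined on it and Lemma \ref{lemma:Bounded_Energy_stationary} applies provided the energy is small enough. The monotonicity estimate \eqref{theo:SSHMHF_Energy_Inequ} gives
\begin{equation*}
  \cE(u_h^{j+1}) \le \cE(u_h^j) \le 2,
\end{equation*}
which is exactly the statement that $u_h^{j+1} \in \cE_1$. This closes the induction, so every iterate $u_h^j$ for $0 \le j \le J$ lies in $\cE_1$. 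Applying the corollary \eqref{maxestimate} of Lemma \ref{lemma:Bounded_Energy_stationary} to each such iterate then yields $\|u_h^j\|_{L^\infty} \le \pi$ for all $0 \le j \le J$, as claimed.

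I do not expect any genuine obstacle here: the result is a direct corollary, and the only subtlety is recognizing that the hypothesis $\cE(u_h^0)\le 2$ is preserved under iteration precisely because the discrete energy is nonincreasing. The one point worth stating carefully is that the sublevel-set membership $u_h^j \in \cE_1$ is the right invariant to propagate — one should not try to propagate the $L^\infty$-bound directly, since $\|u_h^j\|_\infty \le \pi$ is a \emph{consequence} of the energy bound via Lemma \ref{lemma:Bounded_Energy_stationary} and does not by itself close an induction. Thus the energy monotonicity does all the work, and the $L^\infty$-bound is read off at the end.
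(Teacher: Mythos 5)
Your proposal is correct and matches the paper's proof, which likewise combines the energy monotonicity of Theorem \ref{theo:Energy_Dissipation} with \eqref{maxestimate}; you merely spell out the (trivial) induction that the paper leaves implicit.
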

\begin{proof}
    This follows immediately from Theorem \ref{theo:Energy_Dissipation} and  \eqref{maxestimate}. 
\end{proof}

\section{Discretization error analysis}\label{sec:error}
In this section we derive bounds for the discretization error of the scheme \eqref{eq:SSHMHF_FullyDiscretized}.
\subsection{Preliminaries}
We collect some results that will be used in the error analysis in Section~\ref{sectdiscrerror}. 
\begin{lemma} \label{lembasic} The following holds:
\begin{equation}
    \norm{v}_{0,r} \leq \norm{v}_{L^2} \leq \|v\|_{H_r^1} \leq \sqrt{2} \norm{v}_{H^1}, \quad v \in H^1(I) \quad \text{with}~v(0)=0. \label{eq:H1_embedding}
\end{equation}
\end{lemma}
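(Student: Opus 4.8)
The plan is to establish the chain of four inequalities in \eqref{eq:H1_embedding} one link at a time, each reducing to an elementary observation about the weighting factor $r \in [0,1]$ and a Poincar\'e-type argument exploiting the boundary condition $v(0)=0$. First I would prove the leftmost inequality $\norm{v}_{0,r} \leq \norm{v}_{L^2}$ by comparing the two integrands pointwise: since
\[
  \norm{v}_{0,r}^2 = \int_0^1 v(r)^2\, r\, \rd r \leq \int_0^1 v(r)^2\, \rd r = \norm{v}_{L^2}^2,
\]
where the inequality holds because $r \leq 1$ on $I$. This immediately gives the first link.

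Next I would establish the rightmost inequality $\|v\|_{H_r^1} \leq \sqrt{2}\,\norm{v}_{H^1}$. Unwinding the definition \eqref{h8}, we have $\|v\|_{H_r^1}^2 = \|\pr v\|_{0,r}^2 + \|v/r\|_{0,r}^2$. The first term is bounded by $\|\pr v\|_{L^2}^2$ by the pointwise estimate just used. For the second term, the point is that $\|v/r\|_{0,r}^2 = \int_0^1 (v(r)/r)^2\, r\, \rd r = \int_0^1 v(r)^2/r\, \rd r$, and I would control this by a Hardy-type inequality: using $v(0)=0$, write $v(r) = \int_0^r \pr v(s)\, \rd s$ and apply the classical one-dimensional Hardy inequality $\int_0^1 v(r)^2/r^2\, \rd r \leq 4\int_0^1 (\pr v)^2\, \rd r$, or more simply bound $\int_0^1 v^2/r\, \rd r$ directly so that the sum of the two contributions is at most $2\norm{v}_{H^1}^2$, yielding the factor $\sqrt{2}$ after taking square roots.

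The middle inequality $\norm{v}_{L^2} \leq \|v\|_{H_r^1}$ is the one I expect to be the genuine obstacle, since here the unweighted $L^2$-norm must be dominated by the \emph{weighted} $H^1_r$-norm, and naive pointwise comparison of weights runs the wrong way near $r=0$. The key is again to use $v(0)=0$ together with a weighted Hardy inequality adapted to the measure $r\,\rd r$. Writing $v(r)=\int_0^r \pr v(s)\,\rd s$ and applying Cauchy--Schwarz with the measure $s\,\rd s$, one obtains a pointwise bound on $v(r)^2$ in terms of $\int_0^r (\pr v)^2 s\, \rd s$, which after integrating against $\rd r$ and interchanging the order of integration gives $\norm{v}_{L^2}^2 \leq \|\pr v\|_{0,r}^2 \leq \|v\|_{H_r^1}^2$; the boundary term vanishes precisely because $v(0)=0$, which is why the hypothesis is needed.

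Assembling the three links gives the full chain \eqref{eq:H1_embedding}. I would present the argument in the order left link, middle link, right link, collecting the elementary weight comparisons and the Hardy-type estimate as the only nontrivial inputs; no deep machinery beyond $v(0)=0$ and Cauchy--Schwarz is required, and the constants work out to exactly those stated.
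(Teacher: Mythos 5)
Your first and third links are sound. The first is the trivial weight comparison $r\le 1$. For the third, your ``direct'' bound $\int_0^1 v^2/r\,\rd r\le\int_0^1 (\pr v)^2\,\rd r$, obtained from $v(r)=\int_0^r \pr v(s)\,\rd s$ and Cauchy--Schwarz, is exactly the Hardy-type argument the paper uses; note that your alternative of invoking the classical Hardy inequality $\int_0^1 v^2/r^2\,\rd r\le 4\int_0^1(\pr v)^2\,\rd r$ would only yield the constant $\sqrt5$, so the direct version is the one you actually need for $\sqrt2$.

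The middle inequality is where your proposal genuinely breaks, and both your diagnosis and your proposed fix are wrong. The Cauchy--Schwarz step with the measure $s\,\rd s$ fails outright: writing $v(r)=\int_0^r \pr v(s)\,s^{1/2}s^{-1/2}\,\rd s$ produces the factor $\int_0^r s^{-1}\,\rd s=\infty$, so there is no pointwise bound on $v(r)^2$ in terms of $\int_0^r(\pr v)^2 s\,\rd s$. Worse, the inequality you are steering toward, $\norm{v}_{L^2}^2\le\norm{\pr v}_{0,r}^2$, is false: for $v(r)=r^{0.6}$ (which lies in $H^1(I)$ with $v(0)=0$) one has $\norm{v}_{L^2}^2=1/2.2\approx 0.45$ but $\norm{\pr v}_{0,r}^2=0.3$. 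The root of the problem is that you used only the derivative part of the $H^1_r$-norm. The correct argument is the ``naive'' one you dismissed: the $H^1_r$-norm also contains $\norm{v/r}_{0,r}^2=\int_0^1 v^2/r\,\rd r$, and since $1/r\ge 1$ on $I$ the weight comparison runs the \emph{right} way near $r=0$, giving $\norm{v}_{L^2}^2=\int_0^1 v^2\,\rd r\le\int_0^1 v^2/r\,\rd r\le\norm{v}_{H^1_r}^2$ with no Hardy inequality and no use of $v(0)=0$ at all. This is precisely why the paper settles the first two inequalities with ``follow directly from the definitions'' and reserves the Hardy-type argument for the third.
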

\begin{proof}
The first two inequalities in \eqref{eq:H1_embedding} follow directly from the definitions. For the third one we use $\int_0^1  v'(r)^2 r \, \mathrm{d}r 
     \leq  \int_0^1 v'(r)^2 \, \mathrm{d}r$ and the (Hardy-type) inequality
\[
 \int_0^1 \frac{v(r)^2}{r}\, \mathrm{d}r  
     \leq  \int_0^1 v'(r)^2 \, \mathrm{d}r,
\]
which follows from
\[
  \frac{v(r)^2}{r} = \frac{1}{r} \left( \int_0^r v'(s) \,\mathrm{d}s \right)^2\leq \int_0^r v'(s)^2 \,\mathrm{d}s  \leq \int_0^1 v'(s)^2 \,\mathrm{d}s 
\]
and integrating this inequality.
\end{proof}

Hence, we have the embeddings $S^k_{h,0} \subset H^1_{0} \subset H^1_{r,0}$.
\\
The  nodal interpolation operator on  $S^k_h$ is denoted by $\cI_h$. 
By classical finite element theory, e.g.\cite[Chapter 4]{BrennerScott08}, we have the interpolation error bounds 
\begin{align}
    \norm{v-\mathcal{I}_h v}_{H^1} &\leq c h^m \norm{v}_{H^{m+1}}, \quad v \in H^{m+1}(I),~~0 \leq m \leq k, \label{eq:Interpolation_error_H1} \\
   \norm{v-\mathcal{I}_h v}_{W^1_\infty} &\leq c h^{m} \norm{v}_{W^{m+1}_\infty},\quad v \in W_\infty^{m+1}(I), ~~0 \leq m \leq k. \label{eq:Interpolation_error_Linf}
\end{align} 
We define the Galerkin projection $\mathcal{P}_h:H^1_{r,0} \rightarrow  S^k_{h,0}$ by
\begin{align}
    \left(\mathcal{P}_h v,v_h\right)_{H^1_r} = \left(v,v_h\right)_{H^1_r} \quad \text{for all}~v_h \in S^k_{h,0}. \label{eq:Ritz_Projection_Def}
\end{align}
From the projection property, \eqref{eq:H1_embedding}  and \eqref{eq:Interpolation_error_H1},  it follows that
\begin{align}
    \norm{\mathcal{P}_hv}_{H^1_r} & \leq  \norm{v}_{H^1_r},  \quad v \in H_{r,0}^1, \label{eq:Proj_Stab_H1r} \\
 \norm{v-\mathcal{P}_hv}_{H^1_r} &\leq  c\norm{v-\mathcal{I}_hv}_{H^1} \leq c h^m \norm{v}_{H^{m+1}},\quad v \in H_{r,0}^1 \cap H^{m+1}(I),  \label{eq:Projection_Error_H1r}
    \end{align}
for $0 \leq m \leq k$. 
In the analysis we need a bound for $\|\frac{v_h}{r}\|_{L^\infty}:=\max_{r \in [0,1]} \left| \frac{v_h(r)}{r} \right|$ in terms of $\norm{v_h}_{H^1_r}$ for finite element functions  $v_h \in S^k_{h,0}$.
We derive such a result.
\begin{lemma}\label{Leminverse}
    For  $v_h \in S^k_{h,0}$ the following estimate holds:
    \begin{align}
    \norm{\frac{v_h}{r}}_{L^\infty} & \leq ch^{-1} \norm{v_h}_{H^1_r}. \label{inverse}
    \end{align}
\end{lemma}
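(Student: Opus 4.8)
The plan is to prove the estimate \emph{elementwise}, splitting the grid into the first element $[0,h]$, where the weight $r$ degenerates and one must exploit the boundary condition $v_h(0)=0$, and the remaining elements $T_i=[r_i,r_{i+1}]$, $i\ge 1$, where $r$ is bounded away from zero so that division by $r$ is harmless. On each element I combine a standard (scaled) inverse inequality for polynomials with the weighted contribution $\norm{v_h/r}_{0,r}$ of the $H^1_r$-norm; keeping track of this term, rather than only the gradient term, is precisely what yields the clean power $h^{-1}$ without a spurious logarithmic factor.

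For the elements $T_i$ with $i\ge 1$ I argue as follows. Since $r\ge r_i=ih$ on $T_i$, we have $\left|v_h(r)/r\right|\le r_i^{-1}\norm{v_h}_{L^\infty(T_i)}$. The scaled inverse inequality for polynomials of degree $\le k$ on an interval of length $h$ gives $\norm{v_h}_{L^\infty(T_i)}\le c\,h^{-1/2}\norm{v_h}_{L^2(T_i)}$, and because $s\le r_{i+1}$ on $T_i$ one has $\norm{v_h}_{L^2(T_i)}^2=\int_{T_i}v_h^2\,\rd s\le r_{i+1}\int_{T_i}\frac{v_h^2}{s}\,\rd s=r_{i+1}\norm{v_h/r}_{0,r,T_i}^2$. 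Collecting the powers of $h$ and using $r_{i+1}=(i+1)h\le 2ih=2r_i$ for $i\ge 1$, the prefactor $r_i^{-1}h^{-1/2}r_{i+1}^{1/2}$ is bounded by $c\,h^{-1}$, so that $\left|v_h(r)/r\right|\le c\,h^{-1}\norm{v_h/r}_{0,r}\le c\,h^{-1}\norm{v_h}_{H^1_r}$ on every such element.

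For the first element $T_0=[0,h]$ the weight degenerates and the division by $r$ must be controlled through $v_h(0)=0$. Since $v_h(0)=0$, the quotient $v_h/r$ is itself a polynomial of degree $\le k-1$ on $T_0$, to which the inverse inequality applies directly (equivalently, $v_h(r)=\int_0^r \pr v_h\,\rd s$ gives $\left|v_h(r)/r\right|\le \norm{\pr v_h}_{L^\infty(T_0)}$). In either case one reduces to comparing an unweighted $L^2(T_0)$-norm of a polynomial with its weighted counterpart, which after the affine scaling $s=ht$ amounts to the equivalence $\int_0^1 q^2\,\rd t\le C_k\int_0^1 q^2\,t\,\rd t$ for $q\in P_{k-1}$. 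This holds because the right-hand side is a norm on the finite-dimensional space $P_{k-1}$ (the weight $t$ is positive on $(0,1]$) and all norms on a finite-dimensional space are equivalent, with $C_k$ depending only on $k$. This step supplies the extra factor $h^{-1/2}$ and again yields the bound $c\,h^{-1}\norm{v_h}_{H^1_r}$ on $T_0$.

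Taking the maximum over all elements then gives \eqref{inverse}. The main obstacle I anticipate is not any single estimate but organizing the argument so that the degenerate weight at $r=0$ is absorbed cleanly: the naive route of estimating $\norm{v_h}_{L^\infty}$ via $v_h(r)=-\int_r^1 \pr v_h\,\rd s$ and Cauchy--Schwarz against the weight $r$ produces a $\sqrt{\log(1/h)}$ factor, and the entire point of the elementwise treatment above---relying on $\norm{v_h/r}_{0,r}$ and on $r_{i+1}\le 2r_i$---is precisely to avoid this logarithm and recover the sharp power $h^{-1}$.
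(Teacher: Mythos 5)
Your proof is correct, and on the region away from the origin it takes a genuinely different route from the paper. Near $r=0$ both arguments are the same in spirit: a scaling to the reference element plus a finite-dimensional norm equivalence that exploits $v_h(0)=0$ (the paper bounds $\norm{\hat v_h/\hat r}_{L^\infty([0,1])}$ directly by $\bigl(\int_0^1 \hat v_h'(\hat r)^2\hat r\,\rd\hat r\bigr)^{1/2}$, while you factor through $v_h/r\in P_{k-1}$ and the equivalence $\int_0^1 q^2\,\rd t\le C_k\int_0^1 q^2 t\,\rd t$; both are legitimate and cost the same $h^{-1}$). On $[h,1]$, however, the paper does not argue elementwise at all: it writes $v_h(r)/r=v_h(h)/r+\tfrac1r\int_h^r v_h'(s)\,\rd s$, applies Cauchy--Schwarz against the weight to get a factor $\tfrac{h}{r}\bigl(\ln(r/h)\bigr)^{1/2}\cdot h^{-1}$, and absorbs the logarithm into the decaying prefactor $h/r$ --- so the logarithm you were worried about does appear there but is harmless; this part of the paper's estimate uses only the gradient term $\norm{v_h}_{1,r}$. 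Your elementwise argument instead uses the local inverse inequality $\norm{v_h}_{L^\infty(T_i)}\le c\,h^{-1/2}\norm{v_h}_{L^2(T_i)}$ together with $r_{i+1}\le 2r_i$ and draws only on the zeroth-order weighted term $\norm{v_h/r}_{0,r}$ of the $H^1_r$-norm; it avoids the logarithm altogether and is perhaps more readily adaptable to non-uniform meshes, at the price of invoking inverse estimates on every element rather than a single global integral identity. Either way the constant depends only on $k$, and the final bound is the same.
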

\begin{proof}
	First we consider $ \norm{\frac{v_h}{r}}_{L^\infty([0,h])}$.
	Using the transformation $r \mapsto \hat{r}=\frac{1}{h}r$, $\hat{v}_h(\hat{r}):=v_h(h\hat{r}) = v_h(r)$ and $v_h'(r)=\frac{d}{dr} \hat{v}_h(\hat{r}) = \frac{1}{h}\hat{v}_h'(\hat{r})$ we obtain
	\begin{align*}
		\int_0^h v_h'(r)^2 r \, \mathrm{d}r = \int_0^1 \hat{v}_h'(\hat{r})^2\hat{r} \, \mathrm{d}\hat{r}.
	\end{align*}
	With a suitable norm equivalence constant $c_k$ that depends only on the polynomial degree $k$ we have
	\begin{align*}
		\norm{\frac{\hat{v}_h}{\hat{r}}}_{L^\infty([0,1])} \leq c_k \left(\int_0^1\hat{v}_h'(\hat{r})^2\hat{r} \, \mathrm{d}\hat{r} \right)^{\frac{1}{2}}
	\end{align*}
	and it follows that
	\begin{align}
		\label{eq:Linf_Ineq_0_h}
		\begin{split}
		 \norm{\frac{v_h}{r}}_{L^\infty([0,h])} &= h^{-1}	\norm{\frac{\hat{v}_h}{\hat{r}}}_{L^\infty([0,1])} \leq c_k h^{-1} \left(\int_0^1\hat{v}_h'(\hat{r})^2\hat{r} \, \mathrm{d}\hat{r} \right)^{\frac{1}{2}} \\
		 &= c_k h^{-1} \left(\int_0^h v_h'(r)^2 r \, \mathrm{d}r\right)^{\frac{1}{2}} \leq c_k h^{-1} \norm{v_h}_{H^1_r}.
	 \end{split}
	\end{align}
	Now assume that $r \in [h,1]$. From $\frac{v_h(r)}{r} = \frac{v_h(h)}{r}$ + $\frac{1}{r}\int_h^r v_h'(s) \, \mathrm{d}s$ we have
	\begin{align}
		\left|\frac{v_h(r)}{r} \right| \leq \norm{\frac{v_h}{r}}_{L^\infty([0,h])} + \frac{1}{r}\left|\int_h^r v_h'(s)\, \mathrm{d}s\right|. \label{eq:Linf_Ineq_h_1}
	\end{align}
	For the second term on the right hand side we obtain
	\begin{align*}
		\frac{1}{r}\left|\int_h^r v_h'(s)\, \mathrm{d}s\right| &=\frac{1}{r}\left|\int_h^r v_h'(s)s^{\frac{1}{2}}s^{-\frac{1}{2}}\, \mathrm{d}s\right| \\
		&\leq \left(\int_h^r v_h'(s)^2 s \, \mathrm{d}s\right)^{\frac{1}{2}} \frac{1}{r} \left(\int_h^r \frac{1}{s} \, \mathrm{d}s \right)^{\frac{1}{2}} \\
		&\leq \norm{v_h}_{H^1_r} h^{-1} \frac{h}{r} \left(\ln\left(\frac{r}{h}\right)\right)^{\frac{1}{2}} \\
		&\leq c h^{-1} \norm{v_h}_{H^1_r}.
	\end{align*}
	Using this in \eqref{eq:Linf_Ineq_h_1} and combining with \eqref{eq:Linf_Ineq_0_h} completes the proof.
\end{proof}

\begin{lemma} \label{Lemhelpbound} For $w \in H^{k+1}(I), k\geq 1$, with $w(0)=0$ the following estimate holds:
 \begin{equation} \label{stabB} 
\norm{\frac{\mathcal{P}_h w(r)}{r}}_{L^\infty} \leq c  \norm{w}_{H^{k+1}(I)}. 
\end{equation}
\end{lemma}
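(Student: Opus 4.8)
The plan is to control the $1/r$ singularity at the origin by splitting off the finite element part of $\mathcal{P}_h w$ and treating it with the inverse-type estimate of Lemma~\ref{Leminverse}, while the remaining smooth part is handled by a Hardy/fundamental-theorem-of-calculus argument that exploits $w(0)=0$. Since the relevant $w$ (the Galerkin projection of the solution) also vanishes at $r=1$, we may assume $w\in H^1_{r,0}\cap H^{k+1}(I)$, so that $\mathcal{P}_h w$ is well defined and $\mathcal{I}_h w\in S^k_{h,0}$ (nodal interpolation preserves the homogeneous boundary values). I would then apply the triangle inequality to the decomposition
\[
\frac{\mathcal{P}_h w}{r}=\frac{\mathcal{P}_h w-\mathcal{I}_h w}{r}+\frac{\mathcal{I}_h w-w}{r}+\frac{w}{r},
\]
and estimate the three terms separately.

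For the last two terms I would use that $(\mathcal{I}_h w-w)(0)=0$ and $w(0)=0$, so that for any $v$ with $v(0)=0$ the fundamental theorem of calculus gives $\left|\frac{v(r)}{r}\right|=\frac1r\left|\int_0^r v'(s)\,\rd s\right|\le\|v'\|_{L^\infty([0,r])}$, whence $\left\|\frac{v}{r}\right\|_{L^\infty}\le\|v'\|_{L^\infty}$. With $v=w$ this yields $\left\|\frac{w}{r}\right\|_{L^\infty}\le\|w\|_{W^1_\infty}\le c\|w\|_{H^{k+1}}$ by the Sobolev embedding $H^{k+1}(I)\hookrightarrow W^1_\infty(I)$, valid for $k\ge1$. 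With $v=\mathcal{I}_h w-w$, the interpolation bound \eqref{eq:Interpolation_error_Linf} with $m=k-1$ and the embedding $H^{k+1}(I)\hookrightarrow W^k_\infty(I)$ give $\left\|\frac{\mathcal{I}_h w-w}{r}\right\|_{L^\infty}\le\|(\mathcal{I}_h w-w)'\|_{L^\infty}\le c\,h^{k-1}\|w\|_{W^k_\infty}\le c\,h^{k-1}\|w\|_{H^{k+1}}\le c\|w\|_{H^{k+1}}$, using $h\le1$ and $k\ge1$.

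The main term, and the crux of the proof, is the finite element part $g_h:=\mathcal{P}_h w-\mathcal{I}_h w\in S^k_{h,0}$, for which the inverse estimate of Lemma~\ref{Leminverse} gives $\left\|\frac{g_h}{r}\right\|_{L^\infty}\le c\,h^{-1}\|g_h\|_{H^1_r}$. I would bound $\|g_h\|_{H^1_r}$ by the triangle inequality and two approximation estimates: \eqref{eq:Projection_Error_H1r} with $m=k$ yields $\|\mathcal{P}_h w-w\|_{H^1_r}\le c\,h^k\|w\|_{H^{k+1}}$, while \eqref{eq:H1_embedding} combined with \eqref{eq:Interpolation_error_H1} (again $m=k$) yields $\|w-\mathcal{I}_h w\|_{H^1_r}\le\sqrt2\,\|w-\mathcal{I}_h w\|_{H^1}\le c\,h^k\|w\|_{H^{k+1}}$. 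Hence $\|g_h\|_{H^1_r}\le c\,h^k\|w\|_{H^{k+1}}$, and the factor $h^{-1}$ is absorbed: $\left\|\frac{g_h}{r}\right\|_{L^\infty}\le c\,h^{k-1}\|w\|_{H^{k+1}}\le c\|w\|_{H^{k+1}}$. Combining the three contributions completes the proof. The delicate point is precisely this absorption: the inverse estimate loses a power $h^{-1}$, which is exactly compensated by the optimal-order $h^{k}$ approximation of $g_h$; in the lowest-order case $k=1$ one obtains $h^{0}=1$, so no decay survives, consistent with the fact that the asserted bound \eqref{stabB} carries no positive power of $h$.
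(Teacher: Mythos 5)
Your proof is correct and follows essentially the same route as the paper's: the identical three-term decomposition via $\mathcal{I}_h w$, the fundamental-theorem-of-calculus argument for the two terms vanishing at $r=0$, and Lemma~\ref{Leminverse} combined with an $O(h^k)$ bound on $\|\mathcal{P}_h w-\mathcal{I}_h w\|_{H^1_r}$ to absorb the $h^{-1}$. The only cosmetic differences are that the paper bounds $\|\mathcal{P}_h w-\mathcal{I}_h w\|_{H^1_r}\le\|w-\mathcal{I}_h w\|_{H^1_r}$ directly via the projection's stability rather than by a triangle inequality through $w$, and uses $m=0$ in \eqref{eq:Interpolation_error_Linf} instead of $m=k-1$.
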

\begin{proof}
We start with a triangle inequality
\begin{equation} \label{AA}
\norm{\frac{\mathcal{P}_h w(r)}{r}}_{L^\infty} \leq \norm{\frac{\mathcal{P}_h w - \cI_h w}{r}}_{L^\infty} + \norm{\frac{\mathcal{I}_h w - w}{r}}_{L^\infty} + \norm{\frac{ w}{r}}_{L^\infty}.
\end{equation}
With $w(0)=0$ we obtain
\begin{equation} \label{U1}
 \norm{\frac{ w}{r}}_{L^\infty}= \norm{\frac{\int_0^r  w'(s) \, ds}{r}}_{L^\infty} \leq \|w\|_{W_\infty^1(I)}.
\end{equation}
Using the same argument (note: $(\mathcal{I}_h w - w)(0)=0$) and \eqref{eq:Interpolation_error_Linf} we obtain
\begin{equation} \label{U2}
 \norm{\frac{\mathcal{I}_h w -  w}{r}}_{L^\infty} \leq \norm{\mathcal{I}_h w -  w}_{W_\infty^1(I)} \leq c \|w\|_{W_\infty^1(I)}.
\end{equation}
Using \eqref{inverse} and \eqref{eq:Interpolation_error_H1} we obtain
\begin{equation} \label{U3} \begin{split} 
\norm{\frac{\mathcal{P}_h w - \cI_h w}{r}}_{L^\infty} & \leq \frac{c}{h} \norm{\mathcal{P}_h w - \cI_h w}_{H^1_r}  \\
& \leq  \frac{c}{h}\norm{w - \cI_h w}_{H_r^1} \\ &  \leq   c h^{k-1} \|w\|_{H^{k+1}}.
\end{split}
\end{equation}
We use the estimates \eqref{U1}, \eqref{U2} and \eqref{U3} in \eqref{AA} and with the Sobolev embedding $ H^2(I) \hookrightarrow W_\infty^1(I)$ we obtain the result \eqref{stabB}. 
\end{proof}
\ \\

\subsection{Discretization error bound} \label{sectdiscrerror}
We define the error $e^{j}_h :=u_h^j-u(t_j)$ where $u$ solves \eqref{eq:SSHMHF_Eq} and $u_h^{j+1}$ solves \eqref{eq:SSHMHF_FullyDiscretized}. We use the notation $u(t)=u(t, \cdot)$. 
From now on, we assume that the solution $u=u(t) = u(t,r)$ of \eqref{eq:SSHMHF_Eq}  satisfies the  regularity assumption
\begin{align}
    \label{eq:SSHMHF_Regularity_Exact_Solution}
    \max_{t \in [0,T]} \left\{\norm{u(t)}_{W^{k+1}_\infty} + \norm{\partial_t u(t)}_{H^1}+\norm{\partial_{tt} u(t)}_{L^2} \right\} \leq c < \infty.
\end{align} 
\begin{theorem}
	\label{theo:Discr_err}
    Given  $u_0 \in H^{k+1}(I)$ with $u_0 \in \cE_1$, take $u_h^0 \in S_{h,0}^k \cap \cE_1$. For the error $e^{j}_h :=u_h^j-u(t_j)$, $1 \leq j \leq J$, the following holds:
    \begin{align} \label{main1}
        \norm{\partial_r e^{j}_h}_{0,r} \leq  C (\tau + h^k) +c \|u_0-u_h^0\|_{H_r^1}, 
    \end{align}
    where the constants $C$, $c$ are independent of $h$, $\tau$ but depend on $T$ and on the regularity assumption \eqref{eq:SSHMHF_Regularity_Exact_Solution}. Assume $\|u_0-u_h^0\|_{H_r^1} \leq c h^{k}$. For $h$ and $\tau$  sufficiently small  we have 
    \begin{align} \label{main2}
        \norm{e^{j}_h}_{H^1_r} \leq C (\tau + h^k).
    \end{align}
\end{theorem}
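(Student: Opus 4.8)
The plan is to run the classical parabolic error splitting through the Galerkin projection $\mathcal{P}_h$ of \eqref{eq:Ritz_Projection_Def} and to reproduce, at the discrete level, the dissipation estimate of Theorem~\ref{theo:Energy_dissipation}. I write $e_h^j = \theta^j + \rho^j$, where $\theta^j := u_h^j - \mathcal{P}_h u(t_j) \in S_{h,0}^k$ is the discrete error and $\rho^j := \mathcal{P}_h u(t_j) - u(t_j)$ the projection error. The latter is harmless: \eqref{eq:Projection_Error_H1r} and the regularity \eqref{eq:SSHMHF_Regularity_Exact_Solution} give $\norm{\rho^j}_{H^1_r} \le c h^k$, and an Aubin--Nitsche duality argument (using $u(t_j)\in H^{k+1}$) yields the sharper $\norm{\rho^j}_{0,r} \le c h^{k+1}$. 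By the triangle inequality it then suffices to estimate $\theta^j$.

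Subtracting \eqref{eq:SSHMHF_FullyDiscretized} from \eqref{eq:SSHMHF_VariationalForm} at $t_{j+1}$, tested with $v_h \in S_{h,0}^k$, and using that \eqref{eq:Ritz_Projection_Def} annihilates $\rho^{j+1}$ in the $(\cdot,\cdot)_{H^1_r}$ term, I obtain the error equation
\begin{equation*}
 \tfrac1\tau(\theta^{j+1}-\theta^j, v_h)_{0,r} + (\theta^{j+1}, v_h)_{H^1_r} = \left(\tfrac{f(u_h^j)-f(u(t_{j+1}))}{r^2}, v_h\right)_{0,r} - (T^{j+1},v_h)_{0,r} - \tfrac1\tau(\rho^{j+1}-\rho^j,v_h)_{0,r},
\end{equation*}
with the time-truncation $T^{j+1} := \tfrac{u(t_{j+1})-u(t_j)}{\tau} - \partial_t u(t_{j+1})$, for which $\norm{T^{j+1}}_{0,r}\le c\tau$ by Taylor's theorem and the bound on $\partial_{tt}u$. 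Testing with $\theta^{j+1}$ and using $(a-b,a)_{0,r}=\tfrac12(\norm{a}_{0,r}^2-\norm{b}_{0,r}^2+\norm{a-b}_{0,r}^2)$ gives a recursion with the coercive term $\norm{\theta^{j+1}}_{H^1_r}^2$ on the left, exactly as in \eqref{eq:SSHMHF_Stability_test}. After multiplying by $\tau$ and summing in $j$, the projection increment $\tfrac1\tau(\rho^{j+1}-\rho^j)$ is treated by summation by parts, which transfers the difference onto $\theta$ and leaves only factors $\norm{\rho^j}_{0,r}\le ch^{k+1}$, so that both consistency sources contribute at the optimal order $O(\tau+h^k)$.

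The core difficulty is the nonlinear term. I split $f(u_h^j)-f(u(t_{j+1})) = \big(f(u_h^j)-f(u(t_j))\big) + \big(f(u(t_j))-f(u(t_{j+1}))\big)$; by \eqref{Inequ_nonlin}, the bounds $|u(t)/r|\le\norm{u(t)}_{W^1_\infty}$ and \eqref{eq:SSHMHF_Regularity_Exact_Solution}, the temporal difference is $O(\tau)$ after division by $r^2$. In the spatial difference I insert $\mathcal{P}_h u(t_j)$: the part $f(\mathcal{P}_h u(t_j))-f(u(t_j))$ is controlled by \eqref{Inequ_nonlin} together with $\norm{\mathcal{P}_h u(t_j)/r}_{L^\infty}\le c$ from Lemma~\ref{Lemhelpbound}, contributing a harmless term of size $c\norm{\rho^j}_{0,r}$. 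The dangerous part is $f(u_h^j)-f(\mathcal{P}_h u(t_j))$, since \eqref{Inequ_nonlin} produces the factor $(u_h^j/r)^2$, for which no $L^\infty$ bound is available (the inverse estimate \eqref{inverse} would cost $h^{-1}$). Here the convexity of $F$ is decisive: exploiting $f'(z)=2\sin^2 z$ and the relations of Lemma~\ref{lemcontrol}, one replaces the crude quadratic factor by the energy density $\sin^2 u_h^j / r^2$, which is controlled by the discrete energy $\cE(u_h^j)\le\cE(u_h^0)$ of Theorem~\ref{theo:Energy_dissipation}; together with $\theta^j = e_h^j - \rho^j$ this bounds the dangerous contribution by a Gronwall-admissible weak term $c\norm{e_h^j}_{0,r}^2$ plus a cubic remainder $c\norm{e_h^j}_\infty\norm{e_h^j}_{H^1_r}^2$.

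The cubic remainder is the main obstacle: it carries the strong norm $\norm{e_h^j}_{H^1_r}^2$ with an $O(1)$ coefficient and can be absorbed into the dissipative term $\norm{\theta^{j+1}}_{H^1_r}^2$ only once $\norm{e_h^j}_\infty$ is known to be small. I would therefore close the estimate by induction on $j$: Corollary~\ref{cor:Discrete_Bound} keeps $u_h^j\in\cE_1$ (hence $\norm{u_h^j}_\infty\le\pi$) unconditionally, and the induction hypothesis $\norm{e_h^j}_\infty = O(\tau+h^k)$ — justified for $h,\tau$ small by $\norm{u_0-u_h^0}_{H^1_r}\le ch^k$ — makes the cubic remainder absorbable, whereupon a discrete Gronwall inequality reproduces the bound at step $j+1$. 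This yields the robust seminorm estimate \eqref{main1}, in which the initial error enters with an $O(1)$ constant; upgrading to pointwise $H^1_r$ control by additionally mimicking the increment-based dissipation of Theorem~\ref{theo:Energy_dissipation} gives the full bound \eqref{main2} after a final triangle inequality with $\rho^j$.
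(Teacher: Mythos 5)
Your skeleton (splitting through the Galerkin projection, isolating $f(u_h^j)-f(\mathcal{P}_h u(t_j))$ as the dangerous term, recognizing that the convexity of $F$ must enter) matches the paper, but the way you close the nonlinear estimate is not the paper's argument and, as sketched, has a genuine gap. The paper's decisive step is exact algebra, not a mean-value bound: with $\tilde e_h^j=u_h^j-\mathcal{P}_h u(t_j)$ and the identity $\sin(2x)+\sin(2y)-\sin(2x+2y)=4\sin x\,\sin y\,\sin(x+y)$ one writes $f(u_h^j)-f(\mathcal{P}_h u(t_j))=f(\tilde e_h^j)-2\sin(u_h^j)\sin(-\mathcal{P}_h u(t_j))\sin(\tilde e_h^j)$. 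The term $\bigl(f(\tilde e_h^j)/r^2,(\tilde e_h^{j+1}-\tilde e_h^j)/\tau\bigr)_{0,r}$ is then bounded by $\tfrac1\tau\int_0^1 r^{-2}\bigl(F(\tilde e_h^{j+1})-F(\tilde e_h^j)\bigr)r\,\rd r$ via the convexity inequality \eqref{control1}, and this telescopes together with the $H^1_r$-difference on the left-hand side into $\tfrac1\tau\bigl(\cE(\tilde e_h^{j+1})-\cE(\tilde e_h^j)\bigr)$ by the fundamental relation \eqref{eq:SSHMHF_Rewrite_Energy}; the trilinear remainder is controlled by $\norm{u_h^j}_{L^\infty}\le\pi$ (Corollary~\ref{cor:Discrete_Bound}), $\norm{\mathcal{P}_h u(t_j)/r}_{L^\infty}\le c$ (Lemma~\ref{Lemhelpbound}) and $\norm{\sin(\tilde e_h^j)/r}_{0,r}^2\le 2\cE(\tilde e_h^j)$, which feeds a Gronwall recursion for $\cE(\tilde e_h^j)$ with \emph{no smallness of the error used anywhere}. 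Your substitute --- a ``cubic remainder $c\norm{e_h^j}_\infty\norm{e_h^j}_{H^1_r}^2$'' absorbed by an induction hypothesis $\norm{e_h^j}_\infty=O(\tau+h^k)$ --- fails on two counts. First, \eqref{main1} is asserted for arbitrary $u_h^0\in S^k_{h,0}\cap\cE_1$, where $\norm{u_0-u_h^0}_{H^1_r}$ may be $O(1)$; your induction cannot start there, so at best you address \eqref{main2}. Second, $H^1_r$ does not embed into $L^\infty$ (the weight degenerates at $r=0$; cf.\ the logarithmic factor in the proof of Lemma~\ref{Leminverse}), so recovering the hypothesis $\norm{e_h^j}_\infty=O(\tau+h^k)$ from the conclusion $\norm{\theta^j}_{H^1_r}=O(\tau+h^k)$ needs an extra argument (it could be salvaged via Lemma~\ref{lemma:Bounded_Energy_stationary} and \eqref{upperB}, but you do not supply it).

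Two further points. Testing with $\theta^{j+1}$ gives pointwise-in-time control only of $\norm{\theta^j}_{0,r}$ and merely $\ell^2$-in-time control of $\norm{\theta^j}_{H^1_r}$, whereas the theorem requires an $H^1_r$-type bound at each time level; the increment test function $v_h=(\tilde e_h^{j+1}-\tilde e_h^j)/\tau$ is therefore not an optional ``upgrade'' but the engine of the proof --- it is precisely what produces the telescoping $H^1_r$-difference that merges with $F$ into the energy $\cE(\tilde e_h^j)$. Finally, the Aubin--Nitsche bound $\norm{\rho^j}_{0,r}\le ch^{k+1}$ you invoke is not established in the paper (the authors explicitly note that the observed higher rate in $\norm{\cdot}_{0,r}$ is not covered by their analysis) and would require elliptic regularity for the degenerate weighted operator; fortunately it is not needed for either \eqref{main1} or \eqref{main2}.
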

\begin{proof}
In the proof we use $c$ to denote  a varying constant independent of $h$ and $\tau$.  We split the error in the usual way
\[
  e_h^j=\big(u_h^j-P_hu(t_j)\big) + \big(P_hu(t_j)- u(t_j)\big) =: \tilde e_h^j +\hat e_h^j. 
\]
Using \eqref{eq:Projection_Error_H1r} we have bounds for the projection error $\hat e_h^j$ and obtain
\begin{align} 
 \|\partial_r e_h^j\|_{0,r} & \leq  \|\partial_r  \tilde e_h^j\|_{0,r} + c h^k,  \label{eq:Fully_discretized_Error_splitting} \\
\|e_h^j\|_{H_r^1} & \leq  \| \tilde e_h^j\|_{H_r^1} + c h^k. \label{eq:Fully_discretized_Error_splittingB}
\end{align}
For bounding the terms with $\tilde e_h^j$ we need a rather technical analysis. To improve the presentation we first outline the key ingredients of the proof below:
\begin{itemize}
\item We use the continuous problem and a canonical test function $v_h= \frac{1}{\tau}(\tilde e_h^{j+1}-\tilde e_h^j)$ to derive the recursive relation \eqref{eq:Fully_Discr_Error_Eq}.
\item The terms $R_1$, $R_2$ are bounded using standard consistency arguments. For the terms $R_3,R_4,R_5$ we need linearization. We use \eqref{Inequ_nonlin} and Lemma~\ref{Lemhelpbound}. For $R_5$ we use a convexity argument. This then results in the recursive relation \eqref{eqtotal}.
\item With the fundamental relation \eqref{eq:SSHMHF_Rewrite_Energy} this yields the recursive relation \eqref{F4} for the energy $\cE(\tilde e_h^j)$, which can be treated using standard arguments.
\item Using $\|\partial_r v\|_{0,r} \leq \sqrt{2 \cE(v)}$ and Lemma~\ref{theo:Energy_Norm_Equivalence} in combination with \eqref{eq:Fully_discretized_Error_splitting}-\eqref{eq:Fully_discretized_Error_splittingB} we obtain the error bounds in \eqref{main1} and \eqref{main2}. 
\end{itemize}
We now continue our proof. 
Using the differential equation \eqref{eq:SSHMHF_VariationalForm} and the projection property  \eqref{eq:Ritz_Projection_Def}, we obtain  for arbitrary $v_h \in S^k_{h,0}$ the relation
\begin{align*}
    &\frac{1}{\tau}\big(\mathcal{P}_h (u(t_{j+1})-u(t_j)),v_h\big)_{0,r} + \big(\mathcal{P}_h u(t_{j+1}),v_h\big)_{H^1_r} \\
    &\quad =\left( \frac{1}{\tau}\mathcal{P}_h\big(u(t_{j+1})-u(t_j)\big)-\frac{1}{\tau}\big(u(t_{j+1})-u(t_j)\big),v_h\right)_{0,r} \\
    &\quad+ \left(\frac{1}{\tau}\big(u(t_{j+1})-u(t_j)\big)-\partial_t u(t_{j+1}),v_h\right)_{0,r} \\
    &\quad+\left(\frac{f(u(t_{j+1}))-f(u(t_{j}))}{r^2},v_h\right)_{0,r} +\left(\frac{f(u(t_j))-f(\mathcal{P}_h u(t_{j}))}{r^2},v_h\right)_{0,r} \\
    &\quad + \left(\frac{f(\mathcal{P}_h u(t_{j}))}{r^2},v_h\right)_{0,r}.
\end{align*}
We subtract this from \eqref{eq:SSHMHF_FullyDiscretized}, test with $v_h = \tfrac{1}{\tau}(\Tilde{e}_h^{j+1}-\Tilde{e}_h^j)$ and use Young's inequality. Thus we get, with arbitrary $\varepsilon>0$,
\begin{align}
    &\norm{\frac{\Tilde{e}_h^{j+1}-\Tilde{e}_h^j}{\tau}}_{0,r}^2 + \frac{1}{2\tau} \left(\norm{\Tilde{e}^{j+1}_h}^2_{H^1_r}-\norm{\Tilde{e}^{j}_h}^2_{H^1_r}+\norm{\Tilde{e}^{j+1}_h-\Tilde{e}^{j}_h}^2_{H^1_r}\right) \notag\\ 
    &\quad \leq 2\varepsilon\norm{\frac{\Tilde{e}_h^{j+1}-\Tilde{e}_h^j}{\tau}}_{0,r}^2 +\frac{1}{\varepsilon}\norm{\frac{1}{\tau}\mathcal{P}_h\big(u(t_{j+1})-u(t_j)\big)-\frac{1}{\tau}\big(u(t_{j+1})-u(t_j)\big)}_{0,r}^2 \notag\\
    &\quad\quad\quad+ \frac{1}{\varepsilon}\norm{\partial_t u(t_{j+1})-\frac{1}{\tau}\big(u(t_{j+1})-u(t_j)\big)}_{0,r}^2 \notag \\
    &\quad\quad\quad+\left(\frac{f(u(t_{j}))-f(u(t_{j+1}))}{r^2},\frac{\Tilde{e}_h^{j+1}-\Tilde{e}_h^j}{\tau}\right)_{0,r} \notag\\
    &\quad\quad\quad+\left(\frac{f(\mathcal{P}_h u(t_{j}))-f(u(t_j))}{r^2},\frac{\Tilde{e}_h^{j+1}-\Tilde{e}_h^j}{\tau}\right)_{0,r}\notag \\
    &\quad\quad\quad + \left(\frac{f(u_h^j)-f(\mathcal{P}_h u(t_{j}))}{r^2},\frac{\Tilde{e}_h^{j+1}-\Tilde{e}_h^j}{\tau}\right)_{0,r} \notag\\
    & =:2\varepsilon\norm{\frac{\Tilde{e}_h^{j+1}-\Tilde{e}_h^j}{\tau}}_{0,r}^2 + \tfrac{1}{\varepsilon}R_1 +\tfrac{1}{\varepsilon}R_2 + R_3+R_4+R_5. \label{eq:Fully_Discr_Error_Eq}
\end{align}
For the term $R_1$ we obtain, using $\|v\|_{0,r} \leq \|v\|_{H_r^1}$,  
\begin{align}
  R_1^\frac12   & \leq \norm{\frac{1}{\tau}\mathcal{P}_h\big(u(t_{j+1})-u(t_j)\big)-\frac{1}{\tau}\big(u(t_{j+1})-u(t_j)\big)}_{H^1_r} \notag\\
    & \leq \frac{1}{\tau} \int_{t_j}^{t_{j+1}} \norm{ \cP_h \partial_t u(s) -\partial_t u(s)}_{H_r^1}\, \rd s \notag\\ 
    & \overset{\eqref{eq:Projection_Error_H1r}}{\leq} ch^{k} \norm{\partial_t u}_{L^\infty([0,T];H^{k+1}(I))}
\label{boundR1}
\end{align}
For $R_2$ we have
\begin{equation} \label{boundR2}
  R_2^\frac12  = \left\| \frac{1}{\tau}\int_{t_{j}}^{t_{j+1}} (s-t_{j})\partial_{tt}u(s)\,  \rd s\right\|_{0,r} \leq
  \tfrac12 \tau \max_{s \in [0,T]}\norm{\partial_{tt}u(s)}_{0,r}.
\end{equation}
For the term $R_3$ we note  
\begin{equation}
  \left|\frac{u(t,r)}{r}\right| = \frac{1}{r} \left| \int_0^r \partial_r u(t,s)\, \rd s\right| \leq \|u\|_{L^\infty([0,T];C^1(I))}.  \label{eq:Linf_uoverR_bound}
\end{equation}
Using this and \eqref{Inequ_nonlin} we obtain:
\begin{align}
    R_3&=\left(\frac{f(u(t_j))-f(u(t_{j+1}))}{r^2},\frac{\tilde e_h^{j+1}-\tilde e_h^j}{\tau}\right)_{0,r} \notag \\ 
    & \quad \leq \left(\frac{u(t_j)^2+u(t_{j+1})^2}{r^2}|u(t_{j+1})-u(t_j)|, \left| \frac{\tilde e_h^{j+1}- \tilde e_h^j}{\tau} \right|\right)_{0,r} \notag \\
    &\quad \leq 2 \norm{u}_{L^\infty([0,T]; C^1(I))}^2\norm{u(t_{j+1})-u(t_j)}_{0,r} \norm{\frac{\tilde e_h^{j+1}-\tilde e_h^j}{\tau}}_{0,r} \notag \\
    & \quad \leq   \tau 2 \norm{u}_{L^\infty([0,T]; C^1(I))}^2 \norm{u}_{C^1([0,T]; L^2_{r})}  \norm{\frac{\tilde e_h^{j+1}- \tilde e_h^j}{\tau}}_{0,r}  \notag \\
    & \quad \leq   \varepsilon \norm{\frac{\tilde e_h^{j+1}- \tilde e_h^j}{\tau}}_{0,r}^2   +  \frac{c_u}{\varepsilon} \tau^2,  \label{boundR3}
\end{align}
with $c_u= \norm{u}_{L^\infty([0,T]; C^1(I))}^4 \norm{u}_{C^1([0,T]; L^2_{r})}^2$.  

We now consider $R_4$.
We use \eqref{Inequ_nonlin}, \eqref{eq:Linf_uoverR_bound} and Lemma \ref{Lemhelpbound} and obtain
\begin{align}
    R_4&=\left(\frac{f(u(t_j))-f(\mathcal{P}_h u(t_{j}))}{r^2},\frac{\Tilde{e}_h^{j+1}-\Tilde{e}_h^j}{\tau}\right)_{0,r} \notag \\
    &\quad\leq c\left(\frac{u(t_j)^2+(\mathcal{P}_hu(t_j))^2}{r}\left|\frac{u(t_j)-\mathcal{P}_hu(t_j)}{r}\right|,\left|\frac{\Tilde{e}_h^{j+1}-\Tilde{e}_h^j}{\tau}\right|\right)_{0,r} \notag \\
    &\quad \leq \frac{c}{\varepsilon}\norm{\frac{u(t_j)-\mathcal{P}_hu(t_j)}{r}}_{0,r}^2 + \varepsilon \norm{\frac{\Tilde{e}_h^{j+1}-\Tilde{e}_h^j}{\tau}}_{0,r}^2 \notag \\
    &\quad \leq \frac{c}{\varepsilon}\norm{u(t_j)-\mathcal{P}_hu(t_j)}_{H^1_r}^2 + \varepsilon \norm{\frac{\Tilde{e}_h^{j+1}-\Tilde{e}_h^j}{\tau}}_{0,r}^2 \notag \\
    &\quad \overset{\eqref{eq:Projection_Error_H1r}}{\leq} \frac{c}{\varepsilon}h^{2k} + \varepsilon \norm{\frac{\Tilde{e}_h^{j+1}-\Tilde{e}_h^j}{\tau}}_{0,r}^2 \label{boundR4}.
\end{align}

For the term $R_5$ we need  the following elementary estimate 
\begin{align}
    \big| \sin(2x)+\sin(2y) -\sin(2x+2y)  \big| &= 2 \big| \sin(2x) \sin(y)^2+ \sin(2y) \sin(x)^2  \big| \notag \\
    &= 4  \big|\cos(x)\sin(x) \sin(y)^2 + \cos(y) \sin(y) \sin(x)^2 \big|\notag\\
    &= 4\big| \sin(x) \sin(y) \left(\cos(x)\sin(y)+\cos(y)\sin(x)\right)\big| \notag \\
    &= 4 \big|\sin(x) \sin(y) \sin(x+y)\big| \notag \\
    &\leq 4 |x| |y| |\sin(x+y) |.
    \label{eq:Trigonometric_identity}
\end{align}
We use this estimate and the convexity property \eqref{control1}
\begin{align}
   & R_5 =\left(\frac{f(u_h^j)-f(\cP_h u(t_{j}))}{r^2},\frac{\tilde e_h^{j+1}-\tilde e_h^j}{\tau}\right)_{0,r}  = \left(\frac{\tilde e_h^j-\frac{1}{2} \sin(2 \tilde e_h^j)}{r^2},\frac{\tilde e_h^{j+1}- \tilde e_h^j}{\tau}\right)_{0,r}  \notag\\
    &\qquad \quad-\frac{1}{2} \left(\frac{\sin(2u_h^j)+\sin(-2 \cP_h u(t_j))-\sin(2 \tilde e_h^j)}{r^2},\frac{\tilde  e_h^{j+1}- \tilde e_h^j}{\tau}\right)_{0,r} \notag  \\
    &  \leq \frac{1}{\tau} \int_0^1 \frac{F(\tilde e_h^{j+1})-F(\tilde e_h^j)}{r^2} r \, \mathrm{d}r + 2 \left( \frac{|u_h^j|| \cP_h u(t_j)||\sin(\tilde e_h^j)|}{r^2}, \frac{\left| \tilde e_h ^{j+1}- \tilde e_h^j\right|}{\tau} \right)_{0,r}. \label{boundR5}
\end{align}
We use the uniform boundness result of Corollary~\ref{cor:Discrete_Bound} and Lemma~\ref{Lemhelpbound} to get
\begin{equation}
\label{boundR5_case1}
	\begin{split}
& 2\left( \frac{|u_h^j|| \cP_h u(t_j)||\sin(\tilde e_h^j)|}{r^2}, \frac{\left| \tilde e_h ^{j+1}- \tilde e_h^j\right|}{\tau} \right)_{0,r} \\ &\leq  \frac{4}{\varepsilon}\norm{u_h^j}_{L^\infty}^2 \norm{\frac{\mathcal{P}_hu(t_j)}{r}}_{L^\infty}^2 \norm{\frac{\sin(\Tilde{e}_h^j)}{r}}_{0,r}^2 + \varepsilon \norm{\frac{\tilde{e}^{j+1}_h-\tilde{e}^{j}_h}{\tau}}_{0,r}^2  \\
	&\leq \frac{c}{\varepsilon}\norm{\frac{\sin(\Tilde{e}_h^j)}{r}}_{0,r}^2  +\varepsilon \norm{\frac{\tilde{e}^{j+1}_h-\tilde{e}^{j}_h}{\tau}}_{0,r}^2.
	\end{split}
\end{equation} 
We collect \eqref{boundR1}, \eqref{boundR2}, \eqref{boundR3}, \eqref{boundR4} and \eqref{boundR5_case1} in \eqref{boundR5} to obtain from \eqref{eq:Fully_Discr_Error_Eq}
\begin{equation} \label{eqtotal} \begin{split}
    &\norm{\frac{\Tilde{e}_h^{j+1}-\Tilde{e}_h^j}{\tau}}_{0,r}^2 + \frac{1}{2\tau} \left(\norm{\Tilde{e}^{j+1}_h}^2_{H^1_r}-\norm{\Tilde{e}^{j}_h}^2_{H^1_r}+\norm{\Tilde{e}^{j+1}_h-\Tilde{e}^{j}_h}^2_{H^1_r}\right)  \\ 
    &\quad \leq 5\varepsilon\norm{ \frac{\Tilde{e}_h^{j+1}-\Tilde{e}_h^j}{\tau}}_{0,r}^2+ \frac{c}{\varepsilon}\left( \tau^2+h^{2k}  \right)  \\
    &\quad \quad +\frac{1}{\tau} \int_0^1 \frac{F(\Tilde{e}_h^{j+1})-F(\Tilde{e}_h^j)}{r^2} r \, \mathrm{d}r +  \frac{c}{\varepsilon}\norm{\frac{\sin(\Tilde{e}_h^j)}{r}}_{0,r}^2.
\end{split}
\end{equation}
Now choose $\varepsilon>0$ small enough such that $\norm{ \frac{\Tilde{e}_h^{j+1}-\Tilde{e}_h^j}{\tau}}_{0,r}^2$ can be absorbed to the left hand side. Using the fundamental relation \eqref{eq:SSHMHF_Rewrite_Energy} we arrive at
\[
    \frac{1}{\tau}\left( \cE(\Tilde{e}_h^{j+1}) -\cE(\Tilde{e}_h^j)\right) \leq \frac{c}{\varepsilon} \left(  \tau^2+h^{2k}+\cE(\Tilde{e}_h^j) \right).
\]
We multiply  by $\tau$ and thus get
\begin{equation} \label{F4}
  \cE(\Tilde{e}_h^{j}) \leq (1+c_1 \tau) \cE(\Tilde{e}_h^{j-1}) +c_2 \tau (\tau^2 + h^{2k}).
\end{equation}
Recursive application and using \eqref{upperB} we obtain
\begin{align*}
    \cE(\Tilde{e}_h^{j})  & \leq c_2 \left(\sum_{i=0}^{j-1} (1+c_1 \tau)^i\right)\tau (\tau^2 +h^{2k})  +(1+c_1 \tau)^j \cE(\tilde e_h^0) \\
    & \leq c_2 e^{c_1 T} T (\tau^2 + h^{2k}) + \tfrac12 e^{c_1 T} \|u_h^0-\cP_h u_0\|_{H_r^1}^2. 
\end{align*} 
We apply the triangle inequality $\|u_h^0-\cP_h u_0\|_{H_r^1} \leq \|u_h^0- u_0\|_{H_r^1}+ \|u_0-\cP_h u_0\|_{H_r^1}$. For the last term we use \eqref{eq:Projection_Error_H1r} and then absorb it in the $h^{2k}$ term. This yields
\[
  \sqrt{\cE(\Tilde{e}_h^{j})} \leq  c (\tau + h^k) + c \|u_h^0- u_0\|_{H_r^1}.
\]
Now use $\|\partial_r \Tilde{e}_h^{j}\|_{0,r} \leq \sqrt{2 \cE(\Tilde{e}_h^{j})}$ and 
combine this with the triangle inequality in \eqref{eq:Fully_discretized_Error_splitting}. This yields the result \eqref{main1}.
If $\|u_h^0-\cP_h u_0\|_{H_r^1} \leq c h^k$ holds, we have $\cE({\tilde e}_h^{j}) \leq c (\tau + h^k)$. For $h$ and $\tau$ sufficiently small we thus have $\cE({\tilde e}_h^{j}) \leq K < 2$, i.e, $ \tilde e_h^j \in \mathcal{E}_{b}$ with $b <1$. Using the result \eqref{lowerB} and \eqref{eq:Fully_discretized_Error_splittingB} we obtain
\[
 \|e_h^j\|_{H_r^1} \leq \|\tilde e_h^j\|_{H_r^1} + c h^k \leq c \sqrt{\cE(\tilde e_h^j)} + h^k \leq c (\tau + h^k),
\]
which proves the result \eqref{main2}.
\end{proof}

\section{Numerical results} \label{sec:NumExp}
We consider a problem as in \eqref{eq:SSHMHF_Eq} with $u_0(r) = \pi(1-r)r$ and $T=0.1$.  In this case we have a globally smooth solution. We apply the method \eqref{eq:SSHMHF_FullyDiscretized} and determine the errors at the end time point, i.e. $\|u_{\rm ref}^J-u^J\|_{H^1_r}$ and  $\|u_{\rm ref}^J-u^J\|_{0,r}$. The source code of the experiments can be found in \cite{nguyen_2025_15481333}.
\begin{remark} \rm
	A sufficiently accurate reference solution $u_{\rm ref}$ is determined by using the scheme \eqref{eq:SSHMHF_FullyDiscretized} with sufficiently small mesh and time step sizes. The accuracy is validated by comparing numerical solutions with those of a BDF2 version of \eqref{eq:SSHMHF_FullyDiscretized}.
\end{remark}

Results are presented in Tables~\ref{table:SSHMHF_p1_conv}, \ref{table:SSHMHF_p2_conv} and \ref{table:SSHMHF_tau_conv}. In the tables~\ref{table:SSHMHF_p1_conv} and \ref{table:SSHMHF_p2_conv} we take a very small time step $\tau$ and measure convergence for linear and quadratic finite elements with decreasing mesh size $h$. In Table~\ref{table:SSHMHF_tau_conv} we take a very fine mesh size $h$ and measure convergence for decreasing time step size $\tau$. In all cases we observe  optimal order of convergence in the norm $\|\cdot\|_{H_r^1}$, as predicted by Theorem \ref{theo:Discr_err}. We also see that in  the first two tables the convergence in $\norm{\cdot}_{0,r}$ is one order higher than in $\norm{\cdot}_{H^1_r}$, which is expected, but not covered by our theoretcal analysis. Figure \ref{fig:plot_energy} shows the energy dissipation of the numerical solution which agrees with Theorem \ref{theo:Energy_Dissipation}.
\begin{table}[ht!]
	\centering
	\begin{tabular}{ |p{2.5cm}|| p{2.65cm}| p{1cm}|| p{2.65cm}| p{1cm}|}
		\hline
		$\tau = 10^{-6}$ & $\| u_{\text{ref}}^{M}-u^M\|_{0,r}$  & EOC & $\| u_{\text{ref}}^{M}-u^M\|_{H^1_{r}}$  & EOC  \\
		\hline
		$h=2^{-1}$ & $4.7141\cdot 10^{-2}$ & $-$ & $1.9893 \cdot 10^{-1}$ & $-$ \\
		$h=2^{-2}$ & $1.2293\cdot 10^{-2}$ & $1.94$ & $7.6974 \cdot 10^{-2}$ & $1.37$ \\
		$h=2^{-3}$ & $3.1080\cdot 10^{-3}$ & $1.98$ & $3.5357\cdot 10^{-2}$ & $1.12$ \\
		$h=2^{-4}$ & $7.7945\cdot 10^{-4}$ & $2.00$ & $1.7277\cdot 10^{-2}$ & $1.03$ \\
		$h=2^{-5}$ & $1.9530\cdot 10^{-4}$ & $2.00$ & $8.5880\cdot 10^{-3}$ & $1.01$ \\
		\hline 
	\end{tabular}
	\caption{Discretization error for \eqref{eq:SSHMHF_FullyDiscretized} with $S^1_{h,0}$; time step $\tau$ fixed. }
	\label{table:SSHMHF_p1_conv}
\end{table}
\begin{table}[ht!]
	\centering
	\begin{tabular}{ |p{2.5cm}|| p{2.65cm}| p{1cm}|| p{2.65cm}| p{1cm}|}
		\hline
		$\tau = 10^{-6}$ & $\| u_{\text{ref}}^{M}-u^M\|_{0,r}$  & EOC & $\| u_{\text{ref}}^{M}-u^M\|_{H^1_{r}}$  & EOC  \\
		\hline
		$h=2^{-1}$ & $2.3366\cdot 10^{-3}$ & $-$ & $3.5584 \cdot 10^{-2}$ & $-$ \\
		$h=2^{-2}$ & $2.8004\cdot 10^{-4}$ & $3.06$ & $8.5863 \cdot 10^{-2}$ & $2.05$ \\
		$h=2^{-3}$ & $3.4426\cdot 10^{-5}$ & $3.02$ & $2.1358\cdot 10^{-3}$ & $2.01$ \\
		$h=2^{-4}$ & $4.3833\cdot 10^{-6}$ & $2.97$ & $5.3363\cdot 10^{-4}$ & $2.00$ \\
		$h=2^{-5}$ & $1.2500\cdot 10^{-6}$ & $1.81$ & $1.3348\cdot 10^{-4}$ & $1.99$ \\
		\hline 
	\end{tabular}
	\caption{Discretization error for \eqref{eq:SSHMHF_FullyDiscretized} with $S^2_{h,0}$; time step $\tau$ fixed. }
	\label{table:SSHMHF_p2_conv}
\end{table}
\begin{table}[ht!]
	\centering
	\begin{tabular}{ |p{2.5cm}|| p{2.65cm}| p{1cm}|| p{2.65cm}| p{1cm}|}
		\hline
		$h = 2^{-14}$ & $\| u_{\text{ref}}^{M}-u^M\|_{0,r}$  & EOC & $\| u_{\text{ref}}^{M}-u^M\|_{H^1_{r}}$  & EOC  \\
		\hline
		$\tau=1.25 \cdot 10^{-2}$ & $1.3783\cdot 10^{-2}$ & $-$ & $5.2826 \cdot 10^{-2}$ & $-$ \\
		$\tau=6.25 \cdot 10^{-3}$ & $7.0245\cdot 10^{-3}$ & $0.97$ & $2.6921 \cdot 10^{-2}$ & $0.97$ \\
		$\tau=3.125 \cdot 10^{-3}$ & $3.5467\cdot 10^{-3}$ & $0.99$ & $1.3592 \cdot 10^{-2}$ & $0.99$ \\
		$\tau=1.5625 \cdot 10^{-3}$ & $1.7821\cdot 10^{-3}$ & $0.99$ & $6.8300 \cdot 10^{-3}$ & $0.99$ \\
		$\tau=7.8125 \cdot 10^{-4}$ & $8.9328\cdot 10^{-4}$ & $1.00$ & $3.4233 \cdot 10^{-3}$ & $1.00$ \\
		\hline 
	\end{tabular}
	\caption{Discretization error for \eqref{eq:SSHMHF_FullyDiscretized}; mesh size $h$ fixed. }
	\label{table:SSHMHF_tau_conv}
\end{table}
\begin{figure}[ht!]
	\centering
	\includegraphics[scale=0.16]{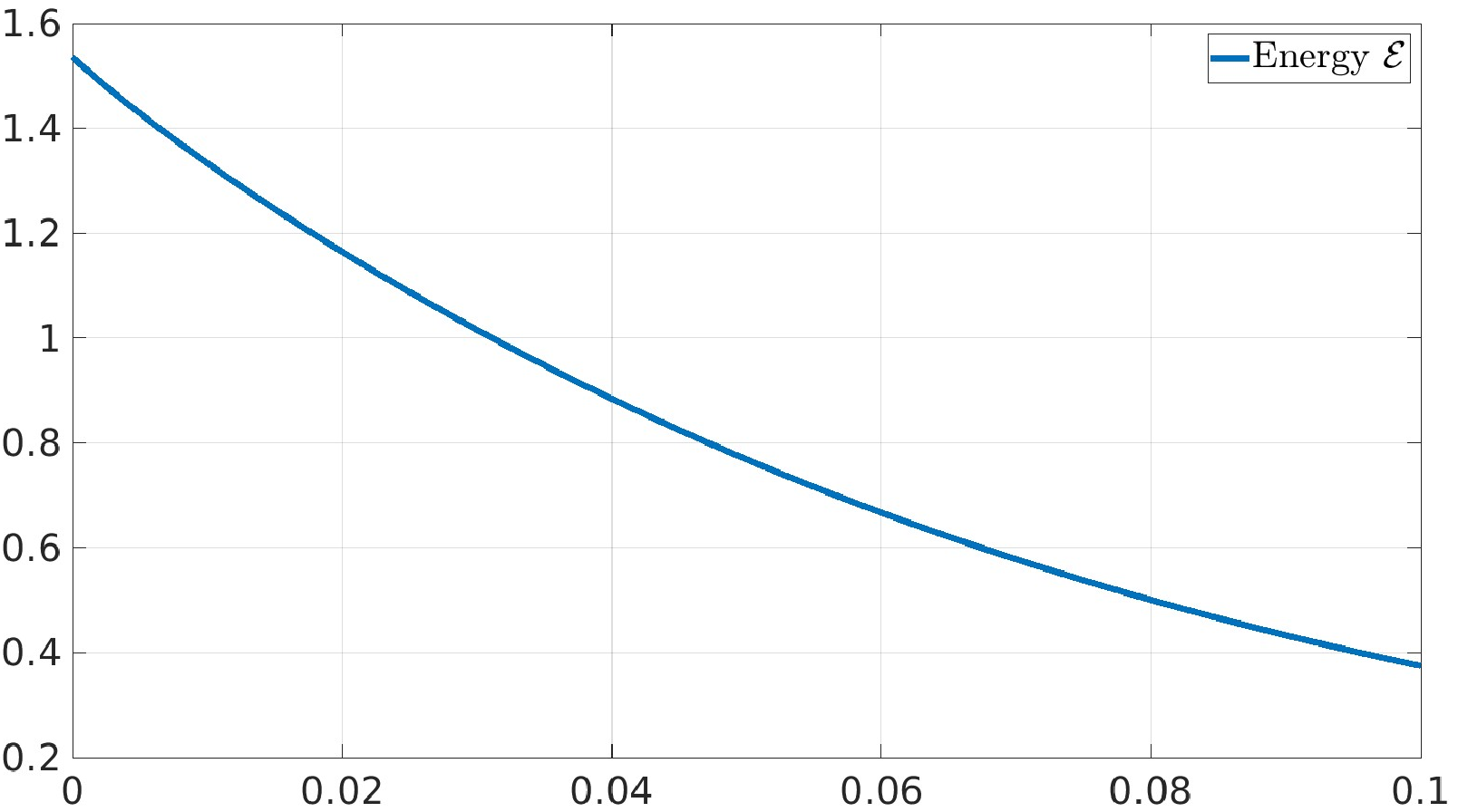}
	\caption{Energy $\cE(u^j_h)$ of discrete solution for $ h=2^{-6}, \tau = 10^{-6}$.}
	\label{fig:plot_energy}
\end{figure}
\ \\[10ex]
{\bf Acknowledgements} The authors acknowledge funding by the Deutsche Forschungsgemeinschaft (DFG, German Research Foundation) – project number 442047500 – through the Collaborative Research Center “Sparsity and Singular
Structures” (SFB 1481).
\newpage
\bibliographystyle{siam}
\bibliography{literature}{}

\end{document}